\numberwithin{equation}{section}
\newtheorem{prop}{Proposition}[section]
\newtheorem{theo}[prop]{Theorem}
\newtheorem{lemm}[prop]{Lemma}
\newtheorem{rema}[prop]{Remark}
\def\and{\quad{\rm and}\quad}
\def\<{\langle}
\def\>{\rangle}
\begin{document}

\title[Interior $C^2$ estimate for Hessian quotient equation]{Interior $C^2$ estimate for Hessian quotient equation in general dimension}
\author[Siyuan Lu]{Siyuan Lu}
\address{Department of Mathematics and Statistics, McMaster University, 
1280 Main Street West, Hamilton, ON, L8S 4K1, Canada.}
\email{siyuan.lu@mcmaster.ca}
\thanks{Research of the author was supported in part by NSERC Discovery Grant.}

\begin{abstract}
In this paper, we study the interior $C^2$ regularity problem for the Hessian quotient equation $\left(\frac{\sigma_n}{\sigma_k}\right)(D^2u)=f$. We give a complete answer to this longstanding problem: for $k=n-1,n-2$, we establish an interior $C^2$ estimate; for $k\leq n-3$, we show that interior $C^2$ estimate fails by finding a singular solution. 
\end{abstract}

\maketitle

\section{Introduction}

In this paper, we study the interior $C^2$ regularity problem for the Hessian quotient equation $\left(\frac{\sigma_n}{\sigma_k}\right)(D^2u)=f$, where $\sigma_k$ is the $k$-th elementary symmetric function. We give a complete answer to this longstanding problem according to the range of $k$.

\medskip

For $k=n-1,n-2$, we establish an interior $C^2$ estimate.
\begin{theo}\label{Theorem}
Let $k=n-1,n-2$ and let $n\geq 3$. Let $f\in C^{1,1}(B_{10}\times\mathbb{R})$ be a positive function and let $u\in C^4(B_{10})$ be a convex solution of 
\begin{align}\label{eq-orginal}
F(D^2u)=\left(\frac{\sigma_n}{\sigma_k}\right)(D^2u)=f(x,u),\quad \textit{in}\quad B_{10}\subset \mathbb{R}^n.
\end{align}
Then we have
\begin{align*}
|D^2u(0)|\leq C,
\end{align*}
where $C$ depends only on $n$, $\|u\|_{C^{0,1}(\overline{B}_9)}$, $\min_{\overline{B}_9\times [-M,M]}f $ and $\|f\|_{C^{1,1}\left( \overline{B}_9\times [-M,M]\right)}$. Here $M$ is a large constant satisfying $\|u\|_{L^\infty(\overline{B}_9)}\leq M$.
\end{theo}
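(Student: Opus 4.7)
The plan is to establish the estimate via a Pogorelov-type maximum principle argument applied to a carefully chosen test function built from the largest eigenvalue of $D^2u$ and a cutoff. Specifically, I would consider a test function of the form
\begin{equation*}
W(x) = \eta(x)^{\beta}\,\lambda_1(D^2u(x))\,\exp\bigl(\phi(|\nabla u|^2)+ A u\bigr)
\end{equation*}
on $B_1$ (after rescaling), where $\eta$ is a cutoff supported in $B_1$ with $\eta\equiv 1$ on $B_{1/2}$, $\lambda_1$ is the largest eigenvalue of the Hessian, $\phi$ is an increasing function, and $A,\beta>0$ are large constants to be chosen. If $W$ attains its maximum at an interior point $x_0$, I would rotate coordinates so that $D^2u(x_0)$ is diagonal with $\lambda_1\ge\cdots\ge\lambda_n>0$, and replace $\lambda_1$ by $u_{11}$ near $x_0$ (a standard perturbation to handle multiplicities).

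Next I would exploit the first and second derivative tests at $x_0$. The vanishing of $\nabla\log W$ at $x_0$ gives the critical-point identity
\begin{equation*}
\frac{u_{11i}}{\lambda_1} = -\beta\,\frac{\eta_i}{\eta} - 2\phi'(|\nabla u|^2)\, u_j u_{ji} - A\, u_i,
\end{equation*}
while the second-derivative condition $F^{ii}(\log W)_{ii}\le 0$, combined with the equation differentiated twice in the $e_1$-direction, yields a schematic inequality
\begin{equation*}
0 \;\ge\; -C + \sum_{i} F^{ii}\Bigl(\frac{u_{11ii}}{\lambda_1} - \frac{u_{11i}^2}{\lambda_1^2}\Bigr) + (\text{cutoff, gradient, and } Au\text{ terms}).
\end{equation*}
Rewriting $F^{ii}u_{11ii}$ via $(\log F)_{11}=(\log f)_{11}$, the concavity of $\log F=\log(\sigma_n/\sigma_k)$ on the positive cone produces the nonnegative contribution $-F^{ij,kl}u_{ij1}u_{kl1}\ge 0$, which is available to absorb the ``bad'' third-derivative term $\sum_i F^{ii}u_{11i}^2/\lambda_1^2$.

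The heart of the proof is the algebraic task of showing that this bad term is dominated by the concavity contribution, up to controllable remainders of the form $C\sum_i F^{ii}\lambda_i^2$ and $C\sum_i F^{ii}$. Using the explicit expressions
\begin{equation*}
F^{ii}=\frac{\sigma_{n-1}(\lambda\mid i)}{\sigma_k(\lambda)}-\frac{\sigma_n(\lambda)\,\sigma_{k-1}(\lambda\mid i)}{\sigma_k(\lambda)^2}
\end{equation*}
together with the standard expansion of $F^{ij,kl}$, I would split the sum over $i$ according to whether $\lambda_i$ is comparable to $\lambda_1$ or much smaller, and use the critical-point identity to eliminate $u_{11i}$ for $i\neq 1$ in favor of gradient terms absorbed by the $Au$ and $\phi(|\nabla u|^2)$ contributions. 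For $k=n-1$ this is essentially a transcription of Pogorelov's Monge-Amp\`ere argument, since $\sigma_n/\sigma_{n-1}=\bigl(\sum_i 1/\lambda_i\bigr)^{-1}$ inherits strong Monge-Amp\`ere-type concavity.

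For $k=n-2$ the algebra is substantially more delicate and will be the main obstacle: Pogorelov concavity alone is insufficient, and one must combine it with a sharpened Cauchy-Schwarz type bound on $F^{ij,kl}u_{ij1}u_{kl1}$ that exploits the specific structure of $\sigma_n/\sigma_{n-2}$, using the equation $\sigma_n=f\sigma_{n-2}$ to control terms involving small eigenvalues. This is presumably the point at which the argument fails for $k\le n-3$, consistent with the singular examples the paper constructs. Once this algebraic core is established, choosing $A$, $\beta$, and $\phi$ sufficiently large forces $\lambda_1(x_0)\le C$, and then $W(0)\le W(x_0)$ combined with the a priori $C^0$ and $C^1$ bounds on $u$ yields the desired estimate $|D^2u(0)|\le C$.
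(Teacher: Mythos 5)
Your proposal is a Pogorelov-type maximum principle argument, but this is a genuinely different route from the paper's, and I believe it has a structural gap that cannot be repaired. Pogorelov's interior estimate for Monge--Amp\`ere is really an estimate for the \emph{Dirichlet problem}: the cut-off is $(-u)^\beta$ with $u=0$ on the boundary, and the identity $\sum_iF^{ii}u_{ii}=n$ (for $\log\det$) is what lets the $(-u)$-terms be absorbed. When one replaces $(-u)$ with a generic bump function $\eta$ to try to get a genuine interior bound, the terms $\beta\,F^{ii}\eta_{ii}/\eta$ and $\beta^2F^{ii}\eta_i^2/\eta^2$ have no structural partner and the argument does not close; indeed interior $C^2$ estimates fail for Monge--Amp\`ere when $n\geq 3$ (Pogorelov's own counterexample, cited in the paper), so ``transcribing Pogorelov'' cannot by itself produce the claimed estimate. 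The deeper obstruction here is degeneracy: for $F=\sigma_n/\sigma_{n-1}$ one has $F^{11}=f^2/\lambda_1^2$, which tends to $0$ as $\lambda_1\to\infty$, so the operator is not uniformly elliptic in the direction of the large eigenvalue and the second-derivative test at the interior maximum gives no leverage there. This is exactly the same phenomenon that blocked maximum-principle proofs of the Warren--Yuan estimate for $\sigma_2$.

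The paper circumvents this with three ingredients you have not used. (i) A sharp \emph{concavity inequality} (Lemmas~\ref{Con-lemm-1} and~\ref{Con-lemm-2}), obtained by rewriting $\sigma_n/\sigma_{n-1}=\bigl(\sum_i\lambda_i^{-1}\bigr)^{-1}$ and $\sigma_n/\sigma_{n-2}=\bigl(\sum_{i\ne j}(\lambda_i\lambda_j)^{-1}\bigr)^{-1}$; this yields the Jacobi inequality $\sum_iF^{ii}b_{ii}\geq c(n)\sum_iF^{ii}b_i^2-C$ for $b=\ln\lambda_1$, without any largeness assumption on $\lambda_1$. (ii) A \emph{Legendre transform} in the spirit of Shankar--Yuan: writing $y=Du+Kx$ and $G^{ii}=F^{ii}(K+\lambda_i)^2$, one finds $G$ is \emph{uniformly elliptic} (Lemmas~\ref{lambda_3-1} and \ref{lambda_3} control the small eigenvalues), so $b^*$ is a subsolution of a uniformly elliptic equation and the local maximum principle (Krylov--Safonov) gives the mean-value-type bound $b(0)\leq C\int_{B_1}b\,\sigma_{n-1}+C$. (iii) \emph{Integration by parts}: using the divergence-free structure of $H^{ij}=\sigma_kF^{ij}$ together with the Jacobi inequality and the induction Lemma~\ref{Cauchy}, the integral is reduced step by step to $\int\sigma_1$ and bounded. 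Your proposal contains a version of the Jacobi/concavity step in disguise (absorbing $\sum_iF^{ii}u_{11i}^2/\lambda_1^2$ by $-F^{ij,kl}u_{ij1}u_{kl1}$), but neither the Legendre transform that restores uniform ellipticity nor the integral bound has an analogue in a pointwise maximum principle argument, and without them the cut-off terms at a degenerate maximum point cannot be controlled.

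Finally, note that your sketch does not explain what actually distinguishes $k=n-1,n-2$ from $k\leq n-3$; you flag $k=n-2$ as ``the main obstacle'' but leave it open. In the paper this distinction is precisely the concavity inequality of Lemma~\ref{Con-lemm-2}, which is specific to $\sigma_n/\sigma_{n-2}$ and is where the restriction on $k$ enters.
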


For $k\leq n-3$, we find a singular solution.
\begin{theo}\label{Singular}
Let $1\leq k\leq n-3$ and let $n\geq 3$. Then there exists a convex viscosity solution $u$ of
\begin{align*}
\left(\frac{\sigma_n}{\sigma_k}\right)(D^2u)=f(x),\quad \textit{in}\quad B_r\subset \mathbb{R}^n,
\end{align*}
for some $r>0$ and some smooth positive function $f$ on $B_r$ such that $u\in C^{0,1}(B_r)$ but $u\notin C^{1,\beta}\left( B_{\frac{r}{2}}\right) $ for any $\beta>1-\frac{2}{n-k}$.
\end{theo}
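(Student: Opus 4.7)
The plan is to exhibit an explicit Pogorelov--type singular solution. Set $m = n - k \geq 3$, $\alpha = 2 - \tfrac{2}{m} \in (1, 2)$, split $x = (x', x_n)$ with $x' \in \mathbb{R}^{n-1}$, and take
\[
u(x) := |x'|^{\alpha}\phi(x_n), \qquad \phi(t) := 1 + t^{2}.
\]
I will argue that on a small ball $B_r$, $u$ is convex and is a viscosity solution of $(\sigma_n/\sigma_k)(D^2 u) = f(x)$ for a smooth positive $f$, lies in $C^{0,1}(B_r)$, and fails to be $C^{1,\beta}$ on $B_{r/2}$ for any $\beta > 1 - \tfrac{2}{n-k} = \alpha - 1$. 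The sharpness of the H\"older exponent is immediate from the ansatz: along $x' = t e_1$, $x_n = 0$ one has $\partial_{x_1} u = \alpha\, t^{\alpha-1}$, so $[\partial_{x_1} u]_{C^{0,\beta}(B_{r/2})}$ blows up as soon as $\beta > \alpha - 1$.

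Off $\{x' = 0\}$, by a rotation in $x'$ one may assume $x' = (r, 0, \ldots, 0)$ with $r = |x'|$, and $D^2 u$ decomposes into an $(n-2)$-dimensional diagonal block carrying the single eigenvalue $a = \alpha\phi\, r^{\alpha-2}$ (the tangential $x'$-directions), together with a coupled $2\times 2$ block
\[
M = \begin{pmatrix} \alpha(\alpha-1)\phi\, r^{\alpha-2} & \alpha \phi'\, r^{\alpha-1} \\ \alpha \phi'\, r^{\alpha-1} & \phi''\, r^{\alpha} \end{pmatrix}
\]
on the span of the radial $x'$-direction and $\partial_{x_n}$. A direct computation yields $\det M = \alpha\, r^{2\alpha-2} B$ with $B(x_n) := (\alpha - 1)\phi\phi'' - \alpha (\phi')^{2} = 2(\alpha - 1) - 2(\alpha + 1)x_n^{2}$, and $\operatorname{tr} M > 0$. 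Since $\alpha \in (1, 2)$, $B > 0$ for $|x_n|$ small, so $M > 0$, $a > 0$, and $D^2 u > 0$ on $B_r \setminus \{x' = 0\}$ for $r$ small; convexity on $B_r$ extends by continuity, and $u \in C^{0,1}(B_r)$.

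The block structure then gives the exact formulas
\[
\sigma_n(D^2 u) = a^{n-2}\det M = \alpha^{n-1}\phi^{n-2} B\, r^{\alpha n - 2(n-1)},
\]
\[
\sigma_k(D^2 u) = \binom{n-2}{k} a^k + \binom{n-2}{k-1} a^{k-1}\operatorname{tr} M + \binom{n-2}{k-2} a^{k-2} \det M = r^{k(\alpha - 2)}\bigl[C_0(x_n) + r^{2} C_1(x_n)\bigr],
\]
with $C_0 = (\alpha\phi)^{k}\bigl[\binom{n-2}{k} + (\alpha - 1)\binom{n-2}{k-1}\bigr] > 0$ and $C_1$ smooth. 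The arithmetic identity
\[
\alpha n - 2(n-1) - k(\alpha - 2) = (\alpha - 2)(n-k) + 2 = 0,
\]
forced by $\alpha = 2 - 2/(n-k)$, eliminates the $r$-powers in the quotient, so
\[
\frac{\sigma_n}{\sigma_k}(D^2 u) = \frac{\alpha^{n-1}\phi^{n-2}(x_n)\, B(x_n)}{C_0(x_n) + |x'|^{2}\, C_1(x_n)} =: f(x),
\]
a smooth function of $(|x'|^2, x_n)$ and hence of $x \in B_r$, positive for $r$ small. Off $\{x'=0\}$ the equation holds classically; on $\{x'=0\}$ both viscosity conditions are vacuous, because $\alpha < 2$ prevents any smooth $\psi \geq u$ from touching at a point of the axis (the quadratic bound $O(|x'|^2)$ can never dominate $|x'|^\alpha \phi$ for small $|x'|$), while for any smooth $\psi \leq u$ touching there, $\partial_{x_n}^{2}\psi \leq 0$ forces $D^2\psi$ outside the admissible positive cone.

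The principal technical difficulty is the explicit bookkeeping in the $\sigma_k$ expansion: tracking the $r$-weights of the three contributions $a^k$, $a^{k-1}\operatorname{tr} M$, $a^{k-2}\det M$, verifying that after extracting $r^{k(\alpha-2)}$ they assemble cleanly into a polynomial in $r^2$ with smooth-in-$x_n$ coefficients, and confirming positivity of the denominator $C_0 + |x'|^{2} C_1$ together with $B > 0$ throughout $B_r$, so that $f$ is genuinely $C^\infty$ and positive on $B_r$, and not merely bounded and continuous.
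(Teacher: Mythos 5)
Your construction is, up to a relabeling of coordinates, identical to the paper's: you use the same Pogorelov ansatz $u = (1+t^2)\,\rho^{\alpha}$ with the singular power in $n-1$ of the variables and the smooth factor in the remaining one, the same exponent $\alpha = 2 - \tfrac{2}{n-k}$, and the same arithmetic identity $(\alpha-2)(n-k)+2=0$ that makes the quotient's $r$-powers cancel. Your block-diagonal computation of $\sigma_n$, the three-term expansion of $\sigma_k$, and the resulting formula $f = \alpha^{n-1}\phi^{n-2}B/(C_0 + |x'|^2 C_1)$ all check out.

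The one place you diverge from the paper is in how the viscosity property is established on the axis $\{x'=0\}$. The paper follows Pogorelov and Urbas: it works with the regularized family $u^\sigma = (1+x_1^2)(\sigma + \sum_{i\geq 2}x_i^2)^{\alpha/2}$, computes that $(\sigma_n/\sigma_k)(D^2 u^\sigma) = f^\sigma$ smoothly for each $\sigma > 0$, and then invokes the stability of viscosity solutions under the locally uniform limit $\sigma \to 0$. You instead verify the viscosity inequalities directly at the singular set: no $C^2$ barrier can touch $|x'|^{\alpha}\phi$ from above when $\alpha<2$, and any $C^2$ $\psi \leq u$ touching on the axis has $\partial_{x_n}^2\psi \leq 0$ there because $u$ vanishes along the axis. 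Both routes are legitimate; the paper's regularization sidesteps the conventions for degenerate Monge--Amp\`ere-type viscosity solutions, while your direct argument needs one small clarification: if $\partial_{x_n}^2\psi(x_0)=0$, the Hessian $D^2\psi(x_0)$ may sit on the boundary of the admissible cone rather than strictly outside it. The supersolution inequality still holds in that borderline case because $\sigma_n(D^2\psi(x_0))=0 \leq f(x_0)\,\sigma_k(D^2\psi(x_0))$, so you should say this rather than assert the cone is avoided outright. With that one sentence added, the proof is complete.
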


For completeness, we also include the case for $n=2$.
\begin{theo}\label{n=2}
Let $f\in C^{1,1}(B_{10}\times\mathbb{R})$ be a positive function and let $u\in C^4(B_{10})$ be a convex solution of 
\begin{align*}
\left(\frac{\sigma_2}{\sigma_1}\right)(D^2u)=f(x,u),\quad \textit{in}\quad B_{10}\subset \mathbb{R}^2.
\end{align*}
Then we have
\begin{align*}
|D^2u(0)|\leq C,
\end{align*}
where $C$ depends only on $\|u\|_{C^{0,1}(\overline{B}_9)}$, $\min_{\overline{B}_9\times [-M,M]}f $ and $\|f\|_{C^{1,1}\left( \overline{B}_9\times [-M,M]\right)}$. Here $M$ is a large constant satisfying $\|u\|_{L^\infty(\overline{B}_9)}\leq M$.
\end{theo}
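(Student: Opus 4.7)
The equation in two dimensions reads $\det(D^2u)=f\cdot\Delta u$, which is precisely the $k=n-1$ Hessian quotient equation for $n=2$. Accordingly, the plan is to run the same Pogorelov-type auxiliary function argument as in Theorem~\ref{Theorem}, adapted to take advantage of two simplifying features of the two-dimensional case. Fix a cutoff $\eta\in C_c^\infty(B_9)$ with $\eta\equiv 1$ on $B_1$, and consider
\begin{align*}
W(x)=\eta(x)^\beta\,u_{\xi\xi}(x)\,\exp\bigl(\phi(|\nabla u|^2)+\psi(u)\bigr),
\end{align*}
maximised over $x\in B_9$ and unit vectors $\xi$, with $\beta>0$ and $\phi,\psi$ to be chosen. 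Assume the maximum is attained at an interior $(x_0,\xi_0)$; after rotating coordinates so that $\xi_0=e_1$ and $D^2u(x_0)$ is diagonal, one has $u_{11}(x_0)=\lambda_{\max}(x_0)$, and controlling $u_{11}(x_0)$ yields the desired interior bound.

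The decisive algebraic feature here is that $n-k=1$, so that $F=\sigma_2/\sigma_1$ is \emph{itself} concave on the positive cone, not merely $F^{1/(n-k)}$. Differentiating the equation once and then twice in the direction $e_1$ gives
\begin{align*}
F^{ii}u_{ii\ell}=f_\ell+f_u\, u_\ell,
\end{align*}
and
\begin{align*}
F^{ii}u_{ii11}+F^{ij,rs}u_{ij1}u_{rs1}=f_{11}+2f_{1u}u_1+f_{uu}u_1^2+f_u u_{11}.
\end{align*}
Concavity yields $F^{ij,rs}u_{ij1}u_{rs1}\le 0$, so $F^{ii}u_{ii11}\ge -C(1+u_{11})$. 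At the maximum point, $F^{ii}(\log W)_{ii}(x_0)\le 0$; expanding via $(\log u_{11})_{ii}=u_{ii11}/u_{11}-(u_{i11}/u_{11})^2$ and then using the critical-point equations $(\log W)_i=0$ to rewrite $u_{i11}/u_{11}$ in terms of the derivatives of $\eta,\phi,\psi$ at $x_0$, produces a purely algebraic inequality in $u_{11}(x_0)$, $u_{22}(x_0)$ and the data.

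The remaining task is to tune the weights so that the good terms dominate the bad third-order term $-F^{ii}(u_{i11}/u_{11})^2$. The main obstacle will be the degeneration of $F^{11}=u_{22}^2/(u_{11}+u_{22})^2$ as $u_{11}\to\infty$, which prevents one from absorbing large quantities using $F^{11}$. This is rescued by the equation itself: from $f=u_{11}u_{22}/(u_{11}+u_{22})$, if $u_{11}$ is very large then $u_{22}\to f$, a controlled positive quantity, so that $F^{22}=u_{11}^2/(u_{11}+u_{22})^2$ is bounded below by a positive universal constant. Hence in the regime one wishes to exclude, the operator is uniformly elliptic transverse to the large eigendirection—precisely where the dangerous third-order terms live. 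Choosing $\phi$ and $\psi$ to be linear with suitably large coefficients depending on $\min f$ and $\|f\|_{C^{1,1}}$ will then let the first-order terms absorb the bad term, forcing $u_{11}(x_0)\le C$ and therefore the required interior bound on $|D^2u|$.
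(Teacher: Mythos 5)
Your proposal is a genuinely different route, but it has a gap that I do not think can be repaired as stated. First, a point of fact: the paper's proof of Theorem \ref{Theorem} is \emph{not} a Pogorelov-type auxiliary function argument, so you are not "running the same argument"; the paper's strategy is a Jacobi inequality for $b=\log\lambda_1$ obtained from a quantitative concavity estimate (Lemma \ref{Con-lemm-1}), a Legendre transform that renders the linearized operator uniformly elliptic (so the local maximum principle gives a mean value inequality, Lemma \ref{mean}), and then integration by parts (Section 6). For $n=2$ the paper simply observes that this entire machinery goes through verbatim.

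The gap in your Pogorelov-type attempt is that there is no "good" term growing with $\lambda_1=u_{11}$. In the classical Pogorelov estimate for Monge--Amp\`ere, the decisive positive contribution is $\mu\,F^{ii}u_{ii}^2 = \mu\,(\det D^2u)\,\mathrm{tr}(D^2u)\sim \mu f\lambda_1\to\infty$, which overwhelms the bad third-order terms. For $F=\sigma_2/\sigma_1$ in dimension $2$ one computes $F^{ii}=F^2/\lambda_i^2$, so
\begin{align*}
\sum_i F^{ii}\lambda_i^2 \;=\; 2F^2 \;=\; 2f^2,
\end{align*}
which is uniformly bounded, independent of $\lambda_1$. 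Likewise $\sum_iF^{ii}\lambda_i=F=f$ and $\sum_iF^{ii}\le 1$. So after you substitute the critical-point equations into $F^{ii}(\log W)_{ii}(x_0)\le 0$, \emph{every} term (good and bad, from $\phi$, from $\psi$, from the third-order pieces) is uniformly bounded, and the inequality produces no contradiction as $\lambda_1\to\infty$. Taking linear $\phi,\psi$ with large slopes does not help, because the bad squared terms scale like $(\phi')^2 f^2$ while the good first-order terms scale only like $\phi' f^2$. Your observation that $F^{22}$ stays bounded below is correct but misdirected: $F^{22}$ being large makes the negative term $-F^{22}u_{112}^2/u_{11}^2$ \emph{worse}, not better; it is the \emph{smallness} of $F^{11}$ that keeps $-F^{11}u_{111}^2/u_{11}^2$ from blowing up, and that is already accounted for by the equation. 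This boundedness of all terms is exactly the degeneracy the paper's Legendre transform is designed to remove: after the transform, $G^{ii}=F^{ii}(1+\lambda_i)^2=f^2(1+\lambda_i)^2/\lambda_i^2$ satisfies $1/C_0\le G^{ii}\le C_0$ (see \eqref{g-2-1}), so the local maximum principle applies to $b^*$. Finally, note that the mere concavity $F^{ij,rs}u_{ij1}u_{rs1}\le 0$ you invoke is strictly weaker than the paper's Lemma \ref{Con-lemm-1}, whose extra terms $-\frac{2}{F}\bigl(\sum_iF^{ii}\xi_i\bigr)^2+\frac{F^{11}\xi_1^2}{\lambda_1}$ are exactly what produce the Jacobi inequality $\sum_iF^{ii}b_{ii}\ge c\sum_iF^{ii}b_i^2-C$; without that quantitative form, even the paper's own strategy would not close.
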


\begin{rema}
Using the same idea by Urbas \cite{Urbas}, we can extend our singular solution to show that interior $C^2$ estimate fails for the Hessian quotient equation $\left(\frac{\sigma_k}{\sigma_l} \right)(D^2u)=f$ for $1\leq l<k\leq n$ with $k-l\geq 3$. Consequently, the only remaining cases for general Hessian quotient equation $\left(\frac{\sigma_k}{\sigma_l} \right)(D^2u)=f$ are $\left(\frac{\sigma_k}{\sigma_{k-1}}\right)(D^2u)=f$ and $\left(\frac{\sigma_k}{\sigma_{k-2}}\right)(D^2u)=f$ for $2\leq k\leq n-1$. For curvature equation $\left(\frac{\sigma_k}{\sigma_{k-1}}\right)(\kappa)=f$, interior $C^2$ estimate was proved by Guan and Zhang \cite{GZ}.
\end{rema}

Priori to our result, very little was known concerning the interior $C^2$ estimate for Hessian quotient equation. The only known cases are: $\left(\frac{\sigma_3}{\sigma_1}\right)(D^2u)=1$ in dimension $3$ and $4$ by Chen, Warren and Yuan \cite{CWY} and Wang and Yuan \cite{WdY14} using special Lagrangian structure of the equation; $\left(\frac{\sigma_3}{\sigma_1}\right)(D^2u)=f$ in dimension $3$ and $4$ by Zhou \cite{Zhou} using twisted special Lagrangian structure of the equation; $\left(\frac{\sigma_3}{\sigma_1}\right)(D^2u)=f$ in dimension $3$ by the author \cite{Lu23} using Legendre transform. 

\medskip

The study of interior $C^2$ estimate for fully nonlinear equations is of great interest due to its simple form: namely the estimate does not depend on any information of the solution on the boundary. It dates back to Heinz's work on Weyl's embedding problem. In \cite{H}, Heinz established an interior $C^2$ estimate for the Monge-Amp\`ere equation $\det(D^2u)=f$ for $n=2$, see also recent proofs by Chen, Han and Ou \cite{CHO} and Liu \cite{Liu}. However, interior $C^2$ estimate fails for the Monge-Amp\`ere equation for $n\geq 3$, due to the counter example by Pogorelov \cite{Pb}. Pogorelov's example was extended by Urbas \cite{Urbas} to show that interior $C^2$ estimate fails for the Hessian equation $\sigma_k(D^2u)=f$ for $k\geq 3$. The remaining case for the Hessian equation is $\sigma_2(D^2u)=f$ and it is a longstanding problem.

For $\sigma_2$ equation, a major breakthrough was made by Warren and Yuan. In \cite{WY09}, they obtained an interior $C^2$ estimate for $\sigma_2(D^2u)=1$ for $n=3$. For general right hand side $f$ in dimension $3$, interior $C^2$ estimate was proved recently by Qiu \cite{Q1,Q2}. For $n=4$, interior $C^2$ estimate for $\sigma_2(D^2u)=1$ was established in a cutting edge paper by Shankar and Yuan \cite{SY-23}. For general $n$, interior $C^2$ estimate for $\sigma_2(D^2u)=1$ was proved by McGonagle, Song and Yuan \cite{MSY} under the condition $D^2u> -c(n)$, by Shankar and Yuan \cite{SY-20} under the condition $D^2u>-K$, by Shankar and Yuan \cite{SY-23} under the condition $D^2u\geq -c(n)\Delta u$ and by Mooney \cite{Mooney} under the condition $D^2u>0$. For general $n$ and general right hand side $f$, interior $C^2$ estimate was established by Guan and Qiu \cite{GQ} under the condition $\sigma_3(D^2u)>-A$.

\medskip

Apart from Hessian equation, another known fully nonlinear equation that admits interior $C^2$ estimate is the special Lagrangian equation $\sum_i\arctan(\lambda_i)=\Theta$ \cite{HL}, where $\lambda_i$'s are the eigenvalues of $D^2u$ and $\Theta$ is called the phase. 

The study of interior $C^2$ estimate for special Lagrangian equation was initiated in a pioneer work by Warren and Yuan \cite{WY09} mentioned above. In \cite{WY09}, they obtained an interior $C^2$ estimate for $\sum_i\arctan(\lambda_i)=\frac{\pi}{2}$ for $n=3$ (This is equivalent to $\sigma_2(D^2u)=1$ for $n=3$). Interior $C^2$ estimate for convex solutions of special Lagrangian equation was proved by Chen, Warren and Yuan \cite{CWY}. Interior $C^2$ estimate for special Lagrangian equation with critical and supercritical phase $|\Theta|\geq \frac{(n-2)\pi}{n}$ was established via works of Wang, Warren and Yuan \cite{WY09,WY10,WdY14}. The notion of critical phase was introduced by Yuan \cite{Y06} and it illustrates the dramatic difference between subcritical phase and supercritical phase. For variable phase functions $\theta(x)$, interior $C^2$ estimate was obtained by Bhattacharya and Shankar \cite{BS1,BS2} for convex solutions, by Bhattacharya \cite{B1} for supercritical phase $|\theta(x)|>\frac{(n-2)\pi}{n}$ and by the author \cite{Lu} for critical and supercritical phase $|\theta(x)|\geq\frac{(n-2)\pi}{n}$. Interior $C^2$ estimate for twisted special Lagrangian equation under certain constraints was studied by Zhou \cite{Zhou}. Note that interior $C^2$ estimate fails for special Lagrangian equation with subcritical phase by examples of Nadirashvili and Vlăduţ \cite{NV} and Wang and Yuan \cite{WdY13}. This illustrates the subtlety of the interior $C^2$ estimate.

\medskip

Compared to Hessian equation and special Lagrangian equation, the structure of Hessian quotient equation is much more complicated. This makes the problem of interior $C^2$ estimate extremely difficult. In our previous work \cite{Lu23}, we successfully attacked the problem in dimension $3$ inspired by the work of Shankar and Yuan \cite{SY-20}. The main strategy is as follows. Let $b=\ln\lambda_1$, where $\lambda_1$ is the largest eigenvalue of $D^2u$. The first step is to establish a Jacobi inequality for $b$. The second step is to bound $b$ via its integral using Legendre transform. The third step is to control the integral using integration by parts.

The above strategy works well in dimension $3$, but we encounter significant difficulties when generalizing to dimension $n$. More specifically, step one and step three. First and foremost, the computation for the Jacobi inequality in \cite{Lu23} is very complicated and the method is unlikely to work in general dimension. More precisely, we need to find a better way to control the third order terms. The key observation here is to establish a concavity inequality for Hessian quotient operator by rewriting the equation in a different form. This new method is very straightforward and much simpler. It allows us to control the third order terms in general dimension. We believe this new method can be used in other problems of fully nonlinear equations. The second difficulty is to control the integral involving $b$. Unlike in dimension $3$, we have many more terms to take care of after each integration by parts. These terms involve different orders of $D^2u$ as well as $\nabla b$. We need to find a systematic way to control these terms. To achieve that, we establish an induction lemma to bound all these terms by two terms. We then bound these two terms separately and complete the proof.

\medskip

The organization of the paper is as follows. In Section 2, we will collect some basic properties of the operator $F$ and a simple observation of our equation. In Section 3, we will prove the crucial concavity inequality for Hessian quotient operator. In Section 4, we will establish a Jacobi inequality for $b$. In Section 5, we will perform a Legendre transform to bound $b$ by its integral. In Section 6, we will complete the proof of Theorem \ref{Theorem} and Theorem \ref{n=2} using integration by parts. In Section 7, we will provide a singular solution for $k\leq n-3$.

\section{Preliminaries}

In this section, we will collect some basic properties of the operator $F$ and a simple observation of our equation.

\medskip

Let $\lambda=(\lambda_1,\cdots,\lambda_n)\in \mathbb{R}^n$, we will denote 
\begin{align*}
(\lambda|i)=(\lambda_1,\cdots,\lambda_{i-1},\lambda_{i+1},\cdots,\lambda_n)\in \mathbb{R}^{n-1},
\end{align*}
i.e. $(\lambda|i)$ is the vector obtained by deleting the $i$-th component of the vector $\lambda$. Similarly, $(\lambda|ij)$ is the vector obtained by deleting the $i$-th and $j$-th components of the vector $\lambda$.

We first collect some basic formulas for Hessian operator, see for instance \cite{LT}.
\begin{lemm}\label{Sigma_k-Lemma-0}
For any $\lambda=(\lambda_1,\cdots,\lambda_n)\in \mathbb{R}^n$, we have
\begin{align*}
\sigma_k(\lambda)= \lambda_i\sigma_{k-1}(\lambda|i)+\sigma_k(\lambda|i),\quad \sum_i\sigma_{k}(\lambda|i)&=(n-k)\sigma_{k}(\lambda),\\
\sum_i\lambda_i\sigma_{k-1}(\lambda|i)= k\sigma_k(\lambda),\quad \sum_i\lambda_i^2\sigma_{k-1}(\lambda|i)= \sigma_1(\lambda)&\sigma_k(\lambda)-(k+1)\sigma_{k+1}(\lambda).
\end{align*}
\end{lemm}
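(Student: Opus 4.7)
The four identities in the lemma are classical combinatorial facts about elementary symmetric polynomials, and I would prove them in order, each leveraging the previous ones.

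First I would establish the recursive identity $\sigma_k(\lambda)=\lambda_i\sigma_{k-1}(\lambda|i)+\sigma_k(\lambda|i)$ by the standard splitting argument: $\sigma_k(\lambda)$ is the sum over all $k$-subsets $S\subset\{1,\dots,n\}$ of $\prod_{j\in S}\lambda_j$, and such subsets either contain the index $i$ (giving the term $\lambda_i\sigma_{k-1}(\lambda|i)$) or avoid it (giving $\sigma_k(\lambda|i)$). This is immediate from the definition.

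Next, for $\sum_i\sigma_k(\lambda|i)=(n-k)\sigma_k(\lambda)$, I would count contributions on the left: a fixed monomial $\prod_{j\in S}\lambda_j$ with $|S|=k$ appears in $\sigma_k(\lambda|i)$ precisely when $i\notin S$, and there are exactly $n-k$ such indices $i$, giving the stated multiplicity. For the third identity, I would simply sum the recursive formula over $i$: the left side becomes $n\sigma_k(\lambda)$, and combining with the second identity yields $\sum_i\lambda_i\sigma_{k-1}(\lambda|i)=k\sigma_k(\lambda)$.

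Finally, for the fourth identity I would multiply the recursion by $\lambda_i$ and sum:
\begin{align*}
\sigma_1(\lambda)\sigma_k(\lambda)=\sum_i\lambda_i^2\sigma_{k-1}(\lambda|i)+\sum_i\lambda_i\sigma_k(\lambda|i).
\end{align*}
It remains to identify the last sum. A monomial $\lambda_{j_0}\lambda_{j_1}\cdots\lambda_{j_k}$ indexed by a $(k+1)$-subset $S\subset\{1,\dots,n\}$ arises in $\lambda_i\sigma_k(\lambda|i)$ exactly when $i\in S$ and the remaining $k$ factors are the other elements of $S$, so it appears $k+1$ times in total. Hence $\sum_i\lambda_i\sigma_k(\lambda|i)=(k+1)\sigma_{k+1}(\lambda)$, and rearranging gives the claimed formula. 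There is no serious obstacle here; the only care needed is to set the boundary conventions $\sigma_0\equiv 1$ and $\sigma_j\equiv 0$ for $j<0$ or $j>n$ so that the identities hold in all degenerate ranges of $k$.
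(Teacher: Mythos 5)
Your proof is correct. The paper itself does not prove this lemma; it simply cites Lin and Trudinger \cite{LT} for these standard identities. Your combinatorial argument, using the decomposition of $k$-subsets according to whether they contain a fixed index $i$, deriving the second identity by a multiplicity count, and then obtaining the third and fourth by summing the recursion (respectively after multiplying by $\lambda_i$) and identifying $\sum_i\lambda_i\sigma_k(\lambda|i)=(k+1)\sigma_{k+1}(\lambda)$, is the standard textbook proof and establishes all four identities cleanly, including the degenerate ranges once the conventions $\sigma_0\equiv 1$ and $\sigma_j\equiv 0$ for $j<0$ or $j>n$ are fixed.
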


We now collect some basic properties of the operator $F$.
\begin{lemm}\label{F^{ijkl}}
Let $1\leq k<n$ and let $n\geq 2$. Let $u$ be a convex function. Suppose $D^2u$ is diagonalized at $x_0$. Then at $x_0$, we have
\begin{align*}
F^{pq}=\frac{\partial F}{\partial u_{pq}}=\frac{\sigma_n^{pq}}{\sigma_k}-\frac{\sigma_n\sigma_k^{pq}}{\sigma_k^2},
\end{align*}
\begin{align*}
F^{pq,rs}=\frac{\partial^2F}{\partial u_{pq}\partial u_{rs}}= \begin{cases}
-2\frac{\sigma_n^{pp}\sigma_k^{pp}}{\sigma_k^2}+2\frac{\sigma_n\left(\sigma_k^{pp}\right)^2}{\sigma_k^3},\quad &p=q=r=s,\\
\frac{\sigma_n^{pp,rr}}{\sigma_k}-\frac{\sigma_n^{pp}\sigma_k^{rr}}{\sigma_k^2}-\frac{\sigma_n^{rr}\sigma_k^{pp}}{\sigma_k^2}-\frac{\sigma_n\sigma_k^{pp,rr}}{\sigma_k^2}+2\frac{\sigma_n\sigma_k^{pp}\sigma_k^{rr}}{\sigma_k^3}, &p=q,r=s,p\neq r,\\
\frac{\sigma_n^{pq,qp}}{\sigma_k}-\frac{\sigma_n\sigma_k^{pq,qp}}{\sigma_k^2}, & p=s,q=r,p\neq q,\\
0, & \textit{otherwise}.
\end{cases}
\end{align*}

In particular, for all $p\neq q$, we have $-F^{pq,qp}>0$  and
\begin{align*}
-F^{pq,qp}=\frac{F^{pp}-F^{qq}}{\lambda_q-\lambda_p},\quad\lambda_p\neq \lambda_q,
\end{align*}
where $\lambda_p$ and $\lambda_q$ are the eigenvalues of $D^2u$. 
\end{lemm}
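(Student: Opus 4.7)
The plan is to prove the lemma by direct computation of the derivatives of $F=\sigma_n/\sigma_k$ via the quotient rule, using the basic formulas for derivatives of $\sigma_k$ at a diagonal matrix that are recalled in Lemma~\ref{Sigma_k-Lemma-0}. Throughout, I would use that when $D^2u$ is diagonal we have $\sigma_k^{pq}=0$ for $p\neq q$, $\sigma_k^{pp}=\sigma_{k-1}(\lambda|p)$, and the standard second-order relations $\sigma_k^{pp,qq}=\sigma_{k-2}(\lambda|pq)$ and $\sigma_k^{pq,qp}=-\sigma_{k-2}(\lambda|pq)$ for $p\neq q$, while $\sigma_k^{pq,rs}=0$ whenever the unordered pairs $\{p,q\}$ and $\{r,s\}$ do not match.

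First I would compute $F^{pq}$ by applying the quotient rule to $F=\sigma_n/\sigma_k$ and then evaluating at the diagonal; this is a one-line calculation and gives the first displayed formula. Next I would differentiate the resulting expression for $F^{pq}$ once more with respect to $u_{rs}$ (keeping everything symbolic until the last step, since $\sigma_k^{pq}$ does depend on $u_{rs}$ before evaluation), apply the product/quotient rule, and then specialize to the diagonal. The case analysis is forced by the vanishing properties of $\sigma_k^{pq,rs}$ at the diagonal: all four indices equal produces the first case; the pairing $p=q,\,r=s,\,p\neq r$ produces the second; the transposed pairing $p=s,\,q=r,\,p\neq q$ produces the third; and all other combinations vanish.

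For the identity $-F^{pq,qp}=(F^{pp}-F^{qq})/(\lambda_q-\lambda_p)$, I would do a direct algebraic verification from the formulas just established. On the one hand, the third case of the $F^{pq,rs}$ formula gives
\[
-F^{pq,qp}=\frac{\sigma_{n-2}(\lambda|pq)}{\sigma_k}-\frac{\sigma_n\,\sigma_{k-2}(\lambda|pq)}{\sigma_k^2}.
\]
On the other hand, using the $F^{pp}$ formula together with the telescoping identity $\sigma_{k-1}(\lambda|p)-\sigma_{k-1}(\lambda|q)=(\lambda_q-\lambda_p)\,\sigma_{k-2}(\lambda|pq)$ (from the first recursion in Lemma~\ref{Sigma_k-Lemma-0}) and $\lambda_p^{-1}-\lambda_q^{-1}=(\lambda_q-\lambda_p)/(\lambda_p\lambda_q)$, one finds
\[
\frac{F^{pp}-F^{qq}}{\lambda_q-\lambda_p}=\frac{\sigma_n}{\sigma_k\,\lambda_p\lambda_q}-\frac{\sigma_n\,\sigma_{k-2}(\lambda|pq)}{\sigma_k^2}.
\]
The two sides match via $\sigma_n=\lambda_p\lambda_q\,\sigma_{n-2}(\lambda|pq)$.

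Finally, for the strict positivity $-F^{pq,qp}>0$, I would substitute $\sigma_n=\lambda_p\lambda_q\sigma_{n-2}(\lambda|pq)$ and the decomposition $\sigma_k=\lambda_p\lambda_q\sigma_{k-2}(\lambda|pq)+(\lambda_p+\lambda_q)\sigma_{k-1}(\lambda|pq)+\sigma_k(\lambda|pq)$ into the formula for $-F^{pq,qp}$, which factors as a positive multiple of $(\lambda_p+\lambda_q)\sigma_{k-1}(\lambda|pq)+\sigma_k(\lambda|pq)$; since $u$ is convex with $F=\sigma_n/\sigma_k$ well-defined, all $\lambda_i$ are strictly positive, and this expression is strictly positive because $k<n$ guarantees $\sigma_{k-1}(\lambda|pq)>0$. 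I do not expect any single step here to be a serious obstacle; the main nuisance is simply the bookkeeping of the four cases in the second-derivative formula and keeping track of which $\sigma$-derivatives vanish at the diagonal before versus after differentiating once more.
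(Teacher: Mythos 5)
Your proposal is correct and follows essentially the same route as the paper: direct differentiation via the quotient rule for the first and second derivative formulas, then an algebraic check of the divided-difference identity and of positivity. The only cosmetic difference is that for positivity you expand $\sigma_k - \lambda_p\lambda_q\sigma_k^{pp,qq}$ explicitly as $(\lambda_p+\lambda_q)\sigma_{k-1}(\lambda|pq)+\sigma_k(\lambda|pq)$ and invoke $k<n$, whereas the paper leaves the expression unexpanded and simply cites convexity (implicitly strict positivity of the eigenvalues); your version is slightly more explicit but amounts to the same thing.
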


\begin{proof}
The first part follows from standard computation. For the second part, we have
\begin{align*}
-F^{pq,qp}=&\ -\frac{\sigma_n^{pq,qp}}{\sigma_k}+\frac{\sigma_n\sigma_k^{pq,qp}}{\sigma_k^2}\\
=&\ \frac{\sigma_n^{pp,qq}}{\sigma_k}-\frac{\sigma_n\sigma_k^{pp,qq}}{\sigma_k^2}\\
=&\ \frac{\sigma_n}{\sigma_k}\left(\frac{1}{\lambda_p\lambda_q}-\frac{\sigma_k^{pp,qq}}{\sigma_k}\right)\\
=&\ \frac{\sigma_n}{\sigma_k}\cdot\frac{\sigma_k-\lambda_p\lambda_q\sigma_k^{pp,qq}}{\lambda_p\lambda_q\sigma_k}>0.
\end{align*}
We have used the fact that $u$ is a convex function in the last inequality.

Moreover, for $\lambda_p\neq \lambda_q$, we have
\begin{align*}
-F^{pq,qp}=&\ \frac{\sigma_n^{pp,qq}}{\sigma_k}-\frac{\sigma_n\sigma_k^{pp,qq}}{\sigma_k^2}= \frac{1}{\sigma_k}\cdot\frac{\sigma_n^{pp}-\sigma_n^{qq}}{\lambda_q-\lambda_p}-\frac{\sigma_n}{\sigma_k^2}\cdot\frac{\sigma_k^{pp}-\sigma_k^{qq}}{\lambda_q-\lambda_p}\\
=&\ \frac{1}{\lambda_q-\lambda_p}\left(\frac{\sigma_n^{pp}}{\sigma_k}-\frac{\sigma_n\sigma_k^{pp}}{\sigma_k^2}\right)-\frac{1}{\lambda_q-\lambda_p}\left(\frac{\sigma_n^{qq}}{\sigma_k}-\frac{\sigma_n\sigma_k^{qq}}{\sigma_k^2}\right)\\
=&\ \frac{F^{pp}-F^{qq}}{\lambda_q-\lambda_p}.
\end{align*}

\end{proof}

We also need the following simple observations of our equation.

\begin{lemm}\label{lambda_3-1}
Let $k=n-1$ and let $n\geq 2$. Let $u$ be a convex solution of (\ref{eq-orginal}). Suppose $D^2u$ is diagonalized at $x_0$ such that $\lambda_1\geq \cdots\geq \lambda_n$. Then at $x_0$, we have
\begin{align*}
f\leq \lambda_n\leq C(n)f,
\end{align*}
where $C(n)$ is a positive constant depending only on $n$.
\end{lemm}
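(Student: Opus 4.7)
The plan is to use the identity
\[
\frac{\sigma_n}{\sigma_{n-1}}(\lambda) \;=\; \frac{\lambda_1\cdots\lambda_n}{\sum_i \lambda_1\cdots\widehat{\lambda_i}\cdots\lambda_n} \;=\; \frac{1}{\sum_i \lambda_i^{-1}},
\]
which expresses the quotient operator (for $k=n-1$) as the harmonic mean of the eigenvalues, up to a factor of $n$. This identity is immediate from the definition of $\sigma_n$ and $\sigma_{n-1}$, so there is essentially no computation to do; it will also follow from the identities in Lemma \ref{Sigma_k-Lemma-0} upon dividing by $\sigma_n$.

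First I would note that since $u$ is a convex solution of \eqref{eq-orginal} and $f>0$, the positivity of $\sigma_n/\sigma_{n-1}$ forces all eigenvalues to be strictly positive, so $\lambda_n>0$ and the harmonic mean expression is legitimate. Plugging $D^2u(x_0)$ into the above identity and using the equation then gives
\[
\sum_{i=1}^n \lambda_i^{-1} \;=\; \frac{1}{f}.
\]

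Next I would use the ordering $\lambda_1\geq \cdots \geq \lambda_n >0$, which means $\lambda_n^{-1}$ is the largest of the reciprocals. This sandwiches the sum:
\[
\lambda_n^{-1} \;\leq\; \sum_{i=1}^n \lambda_i^{-1} \;=\; \frac{1}{f} \;\leq\; n\,\lambda_n^{-1}.
\]
The left inequality gives $f\leq \lambda_n$, and the right inequality gives $\lambda_n\leq nf$, so the constant $C(n)=n$ works and the lemma follows.

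There is no real obstacle here; the only care needed is to justify $\lambda_n>0$ before dividing, which is automatic from the convexity of $u$ and the positivity of $f$ (since $\sigma_n/\sigma_{n-1}>0$ on the positive cone).
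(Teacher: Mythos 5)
Your proof is correct and essentially the same as the paper's: the paper bounds $\sigma_{n-1}$ between $\lambda_1\cdots\lambda_{n-1}$ and $n\,\lambda_1\cdots\lambda_{n-1}$ and then divides into $\sigma_n$, which upon dividing by $\sigma_n$ is exactly your inequality $\lambda_n^{-1}\leq\sum_i\lambda_i^{-1}\leq n\,\lambda_n^{-1}$. Your harmonic-mean packaging of the same estimate also makes the constant $C(n)=n$ explicit, and it matches the reformulation the paper itself uses later in Lemma \ref{Con-lemm-1}.
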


\begin{proof}
We have
\begin{align*}
f=\frac{\sigma_n}{\sigma_{n-1}}\geq \frac{\lambda_1\cdots\lambda_n}{C(n)\lambda_1\cdots\lambda_{n-1}}= \frac{\lambda_n}{C(n)},
\end{align*}
i.e. $\lambda_n\leq C(n)f$.

Similarly,
\begin{align*}
f=\frac{\sigma_n}{\sigma_{n-1}}\leq \frac{\lambda_1\cdots\lambda_n}{\lambda_1\cdots\lambda_{n-1}}=\lambda_n,
\end{align*}
i.e. $\lambda_n\geq f$.

\end{proof}

\begin{lemm}\label{lambda_3}
Let $k=n-2$ and let $n\geq 3$. Let $u$ be a convex solution of (\ref{eq-orginal}). Suppose $D^2u$ is diagonalized at $x_0$ such that $\lambda_1\geq \cdots\geq \lambda_n$. Then at $x_0$, we have
\begin{align*}
\lambda_n\leq \sqrt{C(n)f},\quad \lambda_{n-1}\geq \sqrt{\frac{f}{C(n)}},
\end{align*}
where $C(n)$ is a positive constant depending only on $n$.
\end{lemm}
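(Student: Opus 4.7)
The plan is to exploit the fact that, since $u$ is convex, all eigenvalues $\lambda_i$ are positive, so every elementary symmetric function in $\lambda$ is a sum of positive terms that we can bound term-by-term. The key observation is that when we write $\sigma_{n-2}(\lambda)$ as the sum over pairs $\{i,j\}$ of the products $\prod_{k\neq i,j}\lambda_k = \sigma_n(\lambda)/(\lambda_i\lambda_j)$, the ordering $\lambda_1 \geq \cdots \geq \lambda_n$ means that the largest summand is precisely the one with the smallest denominator, namely $\sigma_n/(\lambda_{n-1}\lambda_n)$.

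First I would write
\begin{align*}
\sigma_{n-2}(\lambda) = \sum_{i<j} \frac{\sigma_n(\lambda)}{\lambda_i \lambda_j},
\end{align*}
and then sandwich this sum from both sides. On one hand, retaining only the pair $(i,j)=(n-1,n)$ gives $\sigma_{n-2}(\lambda) \geq \sigma_n(\lambda)/(\lambda_{n-1}\lambda_n)$. On the other hand, since each term is at most $\sigma_n(\lambda)/(\lambda_{n-1}\lambda_n)$ and there are $\binom{n}{2}$ of them, we have $\sigma_{n-2}(\lambda) \leq \binom{n}{2}\,\sigma_n(\lambda)/(\lambda_{n-1}\lambda_n)$. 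Dividing $\sigma_n$ by these two bounds yields
\begin{align*}
\frac{\lambda_{n-1}\lambda_n}{\binom{n}{2}} \,\leq\, f = \frac{\sigma_n}{\sigma_{n-2}} \,\leq\, \lambda_{n-1}\lambda_n.
\end{align*}

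From the right-hand inequality, combined with $\lambda_{n-1} \geq \lambda_n$, we get $f \leq \lambda_{n-1}^2$, hence $\lambda_{n-1} \geq \sqrt{f}$, which is stronger than the claimed $\lambda_{n-1}\geq\sqrt{f/C(n)}$. From the left-hand inequality, again using $\lambda_{n-1}\geq\lambda_n$, we get $\lambda_n^2 \leq \lambda_{n-1}\lambda_n \leq \binom{n}{2}f$, so $\lambda_n \leq \sqrt{C(n)f}$ with $C(n) = \binom{n}{2}$.

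There is really no analytical obstacle here: the statement is a purely algebraic fact about elementary symmetric functions of positive numbers, and the only point worth flagging is that convexity of $u$ is what guarantees $\lambda_i > 0$ so that the decomposition $\sigma_{n-2} = \sum_{i<j} \sigma_n/(\lambda_i\lambda_j)$ is valid and each summand is positive. The same strategy, incidentally, explains the previous lemma as the $k=n-1$ case, where $\sigma_{n-1} = \sum_i \sigma_n/\lambda_i$ is sandwiched between $\sigma_n/\lambda_n$ and $n\,\sigma_n/\lambda_n$.
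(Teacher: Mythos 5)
Your proof is correct and follows essentially the same elementary-symmetric-function comparison as the paper: both rely on the observation that $\sigma_{n-2}$ is bounded above and below by a constant times its dominant term $\lambda_1\cdots\lambda_{n-2}=\sigma_n/(\lambda_{n-1}\lambda_n)$, yielding $\lambda_{n-1}\lambda_n \asymp f$. Your packaging is marginally cleaner in that you obtain $\lambda_{n-1}\geq\sqrt{f}$ directly from $f\leq\lambda_{n-1}\lambda_n\leq\lambda_{n-1}^2$, whereas the paper first bounds $\lambda_n$ and then divides, and your constant for the $\lambda_{n-1}$ bound is sharper ($1$ instead of $C(n)$), though this gain is immaterial for the estimate's use downstream.
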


\begin{proof}
We have
\begin{align*}
f=\frac{\sigma_n}{\sigma_{n-2}}\geq \frac{\lambda_1\cdots\lambda_n}{C(n)\lambda_1\cdots\lambda_{n-2}}\geq \frac{\lambda_n^2}{C(n)},
\end{align*}
i.e. $\lambda_n\leq \sqrt{C(n)f}$.

It follows that
\begin{align*}
\lambda_{n-1}=\frac{f\sigma_{n-2}}{\lambda_1\cdots\lambda_{n-2}\cdot\lambda_n}\geq \frac{f}{\lambda_n}\geq \frac{f}{\sqrt{C(n)f}}=\sqrt{\frac{f}{C(n)}}.
\end{align*}

\end{proof}

\section{Concavity inequality}

In this section, we will prove a concavity inequality for Hessian quotient operator, which is crucial in deriving the Jacobi inequality. The key observation here is to rewrite the Hessian quotient equation in a different form. 

The following two lemmas were pointed out to us by Professor Pengfei Guan. We would like to thank him for his generosity for sharing the lemmas. 

\medskip

For $k=n-1$, the proof is quite straightforward.
\begin{lemm}\label{Con-lemm-1}[Guan and Sroka \cite{GS}]
Let $n\geq 2$ and let $F=\frac{\sigma_n}{\sigma_{n-1}}$ with $\lambda_1\geq\cdots\geq\lambda_n>0$. Then for any $\xi\in \mathbb{R}^n$, we have
\begin{align*}
-\sum_{i,j}F^{ii,jj}\xi_i\xi_j-\frac{F^{11}\xi_1^2}{\lambda_1}\geq -\frac{2}{F}\left(\sum_iF^{ii}\xi_i\right)^2+\frac{F^{11}\xi_1^2}{\lambda_1}.
\end{align*}
\end{lemm}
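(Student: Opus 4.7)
The plan is to exploit the special structure when $k=n-1$: since $\sigma_{n-1}=\sum_i\sigma_n/\lambda_i$, the operator collapses to a harmonic-mean form
\begin{align*}
F=\frac{\sigma_n}{\sigma_{n-1}}=\Bigl(\sum_i\lambda_i^{-1}\Bigr)^{-1}.
\end{align*}
Setting $G=1/F=\sum_i\lambda_i^{-1}$, the function $G$ is diagonal in the $\lambda_i$'s with only one nonzero second partial in each variable, so the chain rule for $F=1/G$ will produce an unusually compact formula for $F^{ii,jj}$ that works uniformly for all pairs $i,j$.

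First I would record $G_i=-\lambda_i^{-2}$, $G_{ii,ii}=2\lambda_i^{-3}$, and $G_{ii,jj}=0$ for $i\neq j$. Then $F=1/G$ gives $F^{ii}=F^2/\lambda_i^2$ and, after a short calculation, the unified identity
\begin{align*}
F^{ii,jj}=-\frac{2F^{ii}}{\lambda_i}\delta_{ij}+\frac{2}{F}F^{ii}F^{jj}.
\end{align*}
Plugging this into the quadratic form yields
\begin{align*}
\sum_{i,j}F^{ii,jj}\xi_i\xi_j=-2\sum_i\frac{F^{ii}\xi_i^2}{\lambda_i}+\frac{2}{F}\Bigl(\sum_iF^{ii}\xi_i\Bigr)^2,
\end{align*}
which already contains exactly the term $-\frac{2}{F}(\sum_iF^{ii}\xi_i)^2$ appearing on the right-hand side of the lemma, with the opposite sign after negation.

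Consequently the claimed inequality reduces to
\begin{align*}
\sum_i\frac{F^{ii}\xi_i^2}{\lambda_i}\geq\frac{F^{11}\xi_1^2}{\lambda_1},
\end{align*}
which is immediate because every summand on the left is non-negative (as $\lambda_i>0$ and $F^{ii}>0$) and the $i=1$ summand alone equals the right-hand side. I do not expect any genuine obstacle; the only conceptual step is recognizing the reciprocal form $F=(\sum\lambda_i^{-1})^{-1}$, and everything else is a direct calculation. This matches the paper's remark that the $k=n-1$ case is \emph{quite straightforward}.
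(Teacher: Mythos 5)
Your proof is correct and follows essentially the same route as the paper: rewrite $F$ as a harmonic mean, differentiate $1/G$ to obtain $F^{ii}=F^2/\lambda_i^2$ and the compact formula for $F^{ii,jj}$, and then discard the nonnegative terms $\frac{F^{ii}\xi_i^2}{\lambda_i}$ for $i\geq 2$. The only cosmetic difference is that the paper drops those terms before moving $\frac{F^{11}\xi_1^2}{\lambda_1}$ to the right, while you reduce to the equivalent inequality $\sum_i \frac{F^{ii}\xi_i^2}{\lambda_i}\geq \frac{F^{11}\xi_1^2}{\lambda_1}$; the content is identical.
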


\begin{proof}
We can write
\begin{align*}
F=\frac{\sigma_n}{\sigma_{n-1}}=\frac{1}{\sum_i \frac{1}{\lambda_i}}.
\end{align*}

Thus
\begin{align*}
F^{ii}=\frac{1}{\left(\sum_i \frac{1}{\lambda_i}\right)^2}\cdot\frac{1}{\lambda_i^2}=\frac{F^2}{\lambda_i^2}.
\end{align*}

It follows that 
\begin{align*}
F^{ii,jj}=\frac{2FF^{jj}}{\lambda_i^2}-\frac{2F^2\delta_{ij}}{\lambda_i^3}=\frac{ 2F^{ii}F^{jj}}{F}-\frac{2F^{ii}\delta_{ij}}{\lambda_i}.
\end{align*}

Consequently,
\begin{align*}
-\sum_{i,j}F^{ii,jj}\xi_i\xi_j=&\ -\sum_{i,j}\frac{ 2F^{ii}F^{jj}\xi_i\xi_j}{F}+\sum_i\frac{2F^{ii}\xi_i^2}{\lambda_i}.\\
\geq &\ -\frac{2}{F}\left(\sum_iF^{ii}\xi_i\right)^2+\frac{2F^{11}\xi_1^2}{\lambda_1}.
\end{align*}

Therefore,
\begin{align*}
-\sum_{i,j}F^{ii,jj}\xi_i\xi_j-\frac{F^{11}\xi_1^2}{\lambda_1}\geq -\frac{2}{F}\left(\sum_iF^{ii}\xi_i\right)^2+\frac{F^{11}\xi_1^2}{\lambda_1}.
\end{align*}

\end{proof}

For $k=n-2$, the computation is slightly more involved. Recall that when $n=3$, the following lemma is essentially contained in our previous work \cite{Lu23} using a direct but complicated computation. The current method is much simpler and works for all $n\geq 3$. 
\begin{lemm}\label{Con-lemm-2}[Guan and Sroka \cite{GS}]
Let $n\geq 3$ and let $F=\frac{\sigma_n}{\sigma_{n-2}}$ with $\lambda_1\geq\cdots\geq\lambda_n>0$. Then for any $\xi\in \mathbb{R}^n$, we have
\begin{align*}
-\sum_{i,j}F^{ii,jj}\xi_i\xi_j-\frac{F^{11}\xi_1^2}{\lambda_1}\geq  -\frac{2}{F}\left(\sum_iF^{ii}\xi_i\right)^2+\frac{1}{2(n-1)}\frac{F^{11}\xi_1^2}{\lambda_1}.
\end{align*}
\end{lemm}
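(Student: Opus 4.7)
The plan is to mirror the strategy of Lemma~\ref{Con-lemm-1}: I would rewrite the operator as $F=\sigma_n/\sigma_{n-2}=1/G$ with
$$G=\sum_{i<j}\frac{1}{\lambda_i\lambda_j},$$
which, setting $S_i:=\sum_{j\neq i}1/\lambda_j$, yields the clean formulas
$$F^{ii}=\frac{F^2S_i}{\lambda_i^2},\quad F^{ii,ii}=\frac{2(F^{ii})^2}{F}-\frac{2F^{ii}}{\lambda_i},\quad F^{ii,jj}=\frac{2F^{ii}F^{jj}}{F}-\frac{F^2}{\lambda_i^2\lambda_j^2}\ (i\neq j).$$
Substituting these and regrouping yields the exact identity
$$-\sum_{i,j}F^{ii,jj}\xi_i\xi_j+\frac{2}{F}\Big(\sum_iF^{ii}\xi_i\Big)^2=2\sum_i\frac{F^{ii}\xi_i^2}{\lambda_i}+\sum_{i\neq j}\frac{F^2\xi_i\xi_j}{\lambda_i^2\lambda_j^2},$$
so the proposed concavity inequality is equivalent to
$$2\sum_i\frac{F^{ii}\xi_i^2}{\lambda_i}+\sum_{i\neq j}\frac{F^2\xi_i\xi_j}{\lambda_i^2\lambda_j^2}\geq\Big(1+\frac{1}{2(n-1)}\Big)\frac{F^{11}\xi_1^2}{\lambda_1}.$$

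Next I would rewrite the cross sum as $F^2[(\sum_i\eta_i)^2-\sum_i\eta_i^2]$ with $\eta_i=\xi_i/\lambda_i^2$ and fold the $\sum\eta_i^2$ piece into the diagonal, turning the target into the purely algebraic statement
$$\sum_i(2S_i\lambda_i-1)\eta_i^2+\Big(\sum_i\eta_i\Big)^2\geq\Big(1+\frac{1}{2(n-1)}\Big)S_1\lambda_1\,\eta_1^2.$$
The key input is the identity $S_i\lambda_i+1=\lambda_i\sum_j1/\lambda_j$, which together with the ordering $\lambda_1\geq\cdots\geq\lambda_n$ gives $S_i\lambda_i\geq n-i$. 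Consequently, after moving the right-hand side above to the left, the coefficient of $\eta_i^2$ is nonnegative for $1\leq i\leq n-1$; only the coefficient of $\eta_n^2$ can possibly be negative.

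It remains to show that the resulting quadratic form in $\eta$ is positive semidefinite. I would view it as a rank-one update $D+\mathbf{1}\mathbf{1}^T$ of a diagonal matrix, complete the square in $\eta_n$ (legitimate because the total coefficient of $\eta_n^2$ after including the $(\sum_i\eta_i)^2$ contribution equals $2S_n\lambda_n\geq 0$), and apply the weighted Cauchy--Schwarz bound
$$\Big(\sum_{i<n}\eta_i\Big)^2\leq\Big(\sum_{i<n}a_i\eta_i^2\Big)\Big(\sum_{i<n}\frac{1}{a_i}\Big)$$
to absorb the leftover negative contribution into $\sum_{i<n}a_i\eta_i^2$. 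The main obstacle is the resulting scalar inequality
$$\sum_{i<n}\frac{1}{a_i}\leq\frac{2S_n\lambda_n}{1-2S_n\lambda_n}$$
in the regime $S_n\lambda_n<1/2$. Both sides vanish at the same rate in the degenerate limit $\lambda_n\to 0$, so no slack is available; the verification must exploit the exact normalization $\sum_i1/(\lambda_is_1)=1$ coming from the above identity (with $s_1:=\sum_j1/\lambda_j$), rather than crude monotonicity estimates.
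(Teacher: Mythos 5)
Your reduction to the quadratic form
$$\sum_i d_i\eta_i^2+\Big(\sum_i\eta_i\Big)^2\geq 0,\qquad d_1=\tfrac{2n-3}{2(n-1)}S_1\lambda_1-1,\quad d_i=2S_i\lambda_i-1\ (i>1),$$
is correct and agrees with the paper's up to this point (same formulas for $F^{ii}$, $F^{ii,jj}$, same substitution $\eta_i=\xi_i/\lambda_i^2$). However, the proof is not finished: you reduce, in the problematic regime $2S_n\lambda_n<1$, to the scalar inequality
$$\sum_{i<n}\frac{1}{d_i}\leq\frac{2S_n\lambda_n}{1-2S_n\lambda_n},$$
and then stop, saying only that its verification ``must exploit the exact normalization.'' That sentence is not a proof. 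Because equality in the weighted Cauchy--Schwarz you invoke is attained (at $\eta_i\propto 1/d_i$), this scalar inequality is \emph{equivalent} to the positive semidefiniteness of $D+\mathbf{1}\mathbf{1}^T$ in that regime, so it carries the entire remaining difficulty of the lemma. Leaving it unverified is a genuine gap.

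Two further remarks. First, the claim that ``both sides vanish at the same rate as $\lambda_n\to0$, so no slack is available'' does not appear to be right: for example with $n=3$, $\lambda_1=\lambda_2=1$, $\lambda_3=t\to0$, the left side is $\sim\tfrac{11}{6}t$ while the right side is $\sim 4t$, so the ratio tends to $11/24<1$; this particular degeneration has plenty of slack, and the equality case has to be hunted elsewhere. Second, the paper avoids your scalar inequality altogether: rather than minimizing out $\eta_n$ and then applying Cauchy--Schwarz, it absorbs all cross-terms $2\eta_i\eta_n$ (for $i\neq n$) directly into the two diagonal reservoirs via the Young inequality
$$2\eta_i\eta_n+\frac{2\lambda_n\eta_n^2}{\lambda_i}\geq-\frac{\lambda_i\eta_i^2}{2\lambda_n},$$
uses $S_i\lambda_i\geq\lambda_i/\lambda_n$ (your $S_i\lambda_i\geq n-i$ with only the $j=n$ term kept), and finishes by the counting $(\sum_{i\neq n}\eta_i)^2+(n-3)\eta_1^2+\sum_{i\neq1,n}\tfrac{1}{2}\eta_i^2\geq 0$ for $n\geq3$. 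That route is completely elementary and sidesteps the tight scalar inequality you would still need to establish.
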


\begin{proof}
We can write
\begin{align*}
F=\frac{\sigma_n}{\sigma_{n-2}}=\frac{1}{\sum_{i\neq j} \frac{1}{\lambda_i\lambda_j}}.
\end{align*}

Thus
\begin{align}\label{Concav-1}
F^{ii}=\frac{1}{\left(\sum_{i\neq j}\frac{1}{\lambda_i\lambda_j}\right)^2}\cdot\frac{1}{\lambda_i^2} \left(\sum_{k\neq i} \frac{1}{\lambda_k}\right)=\frac{F^2}{\lambda_i^2} \left(\sum_{k\neq i} \frac{1}{\lambda_k}\right).
\end{align}

It follows that for $i\neq j$, we have
\begin{align*}
F^{ii,jj}= \frac{2FF^{jj}}{\lambda_i^2}\left(\sum_{k\neq i} \frac{1}{\lambda_k}\right)-\frac{F^2}{\lambda_i^2\lambda_j^2}= \frac{2F^{ii}F^{jj}}{F}-\frac{F^2}{\lambda_i^2\lambda_j^2}.
\end{align*}

Similarly, 
\begin{align*}
F^{ii,ii}=\frac{2FF^{ii}}{\lambda_i^2} \left(\sum_{k\neq i} \frac{1}{\lambda_k}\right)-\frac{2F^2}{\lambda_i^3} \left(\sum_{k\neq i} \frac{1}{\lambda_k}\right)=\frac{2(F^{ii})^2}{F}-\frac{2F^{ii}}{\lambda_i}.
\end{align*}

Consequently,
\begin{align*}
-\sum_{i,j}F^{ii,jj}\xi_i\xi_j=&\ -\sum_{i,j} \frac{2F^{ii}F^{jj}\xi_i\xi_j}{F}+\sum_{i\neq j}\frac{F^2\xi_i\xi_j}{\lambda_i^2\lambda_j^2}+\sum_i \frac{2F^{ii}\xi_i^2}{\lambda_i}\\
= &\ -\frac{2}{F}\left(\sum_iF^{ii}\xi_i\right)^2+\sum_{i\neq j}\frac{F^2\xi_i\xi_j}{\lambda_i^2\lambda_j^2}+\sum_i \frac{2F^{ii}\xi_i^2}{\lambda_i}.
\end{align*}

For the sake of simplicity, define
\begin{align*}
\eta_i=\frac{\xi_i}{\lambda_i^2}.
\end{align*}

Together with (\ref{Concav-1}), we have
\begin{align}\label{Concav-2}
&\ -\sum_{i,j}F^{ii,jj}\xi_i\xi_j-\frac{F^{11}\xi_1^2}{\lambda_1}\\\nonumber
=&\ -\frac{2}{F}\left(\sum_iF^{ii}\xi_i\right)^2+\sum_{i\neq j}\frac{F^2\xi_i\xi_j}{\lambda_i^2\lambda_j^2}+\frac{F^{11}\xi_1^2}{\lambda_1}+\sum_{i\neq 1} \frac{2F^{ii}\xi_i^2}{\lambda_i}\\\nonumber
=&\ -\frac{2}{F}\left(\sum_iF^{ii}\xi_i\right)^2+\sum_{i\neq j}F^2\eta_i\eta_j+F^2\lambda_1\left(\sum_{k\neq 1}\frac{1}{\lambda_k}\right)\eta_1^2+\sum_{i\neq 1}2 F^2\lambda_i\left(\sum_{k\neq i}\frac{1}{\lambda_k}\right)\eta_i^2\\\nonumber
=&\ -\frac{2}{F}\left(\sum_iF^{ii}\xi_i\right)^2+F^2\left(\sum_{i\neq j}\eta_i\eta_j+\lambda_1\left(\sum_{k\neq 1}\frac{1}{\lambda_k}\right)\eta_1^2+\sum_{i\neq 1}2\lambda_i\left(\sum_{k\neq i}\frac{1}{\lambda_k}\right)\eta_i^2\right)\\\nonumber
=&\ -\frac{2}{F}\left(\sum_iF^{ii}\xi_i\right)^2+F^2\cdot I.
\end{align}

We have
\begin{align}\label{Concav-3}
I\geq &\ \sum_{i,j\neq n;i\neq j}\eta_i\eta_j+2\sum_{i\neq n}\eta_i\eta_n+\lambda_1\left(\sum_{k\neq 1}\frac{1}{\lambda_k}\right)\eta_1^2+\sum_{i\neq 1,n}\frac{2\lambda_i\eta_i^2}{\lambda_n} \\\nonumber
&\ +2\lambda_n\left(\sum_{k\neq n}\frac{1}{\lambda_k}\right)\eta_n^2.
\end{align}

For each $i\neq n$, we have
\begin{align*}
2\eta_i\eta_n+\frac{2\lambda_n\eta_n^2}{\lambda_i}\geq -\frac{\lambda_i\eta_i^2}{2\lambda_n}.
\end{align*}

Plugging into (\ref{Concav-3}), together with (\ref{Concav-1}) and the fact that $n\geq 3$, we have
\begin{align*}
I\geq &\ \sum_{i,j\neq n;i\neq j}\eta_i\eta_j+\lambda_1\left(\sum_{k\neq 1,n}\frac{1}{\lambda_k}+\frac{1}{2\lambda_n}\right)\eta_1^2+\sum_{i\neq 1,n}\frac{3\lambda_i\eta_i^2}{2\lambda_n}\\
\geq &\ \left(\sum_{i\neq n}\eta_i\right)^2-\sum_{i\neq n}\eta_i^2+(n-2)\eta_1^2+\frac{\lambda_1\eta_1^2}{2\lambda_n}+\sum_{i\neq 1,n}\frac{3\eta_i^2}{2}\\
\geq &\ \frac{\lambda_1\eta_1^2}{2\lambda_n}\geq  \frac{\lambda_1}{2(n-1)}\left(\sum_{k\neq 1}\frac{1}{\lambda_k}\right)\eta_1^2=\frac{1}{2(n-1)}\frac{F^{11}\xi_1^2}{\lambda_1F^2}.
\end{align*}

Plugging into (\ref{Concav-2}), we have
\begin{align*}
-\sum_{i,j}F^{ii,jj}\xi_i\xi_j-\frac{F^{11}\xi_1^2}{\lambda_1}\geq -\frac{2}{F}\left(\sum_iF^{ii}\xi_i\right)^2+\frac{1}{2(n-1)}\frac{F^{11}\xi_1^2}{\lambda_1}.
\end{align*}

\end{proof}

\section{Jabobi inequality}

In this section, we will derive a Jacobi inequality using the crucial concavity inequality. Due to the robust concavity inequality, we do not need to assume $\lambda_1$ is sufficiently large and our argument is much simpler compared to the proof for $n=3$ in our previous work \cite{Lu23}.

\begin{lemm}\label{Jacobi}
Let $k=n-1,n-2$ and let $n\geq 3$.  Let $f\in C^{1,1}(B_{10}\times \mathbb{R})$ be a positive function and let $u\in C^4(B_{10})$ be a convex solution of (\ref{eq-orginal}). For any $x_0\in B_9$, suppose that $D^2u$ is diagonalized at $x_0$ such that $\lambda_1\geq \cdots\geq \lambda_n$. Let $b=\ln\lambda_1$, then at $x_0$, we have
\begin{align*}
\sum_i F^{ii}b_{ii}\geq c(n)\sum_i F^{ii}b_i^2-C,
\end{align*}
in the viscosity sense, where $c(n)$ is a positive constant depending only on $n$ and $C$ depends only on $n$, $\|u\|_{C^{0,1}(\overline{B}_9)}$, $\min_{\overline{B}_9\times [-M,M]}f $ and $\|f\|_{C^{1,1}\left( \overline{B}_9\times [-M,M]\right)}$. Here $M$ is a large constant satisfying $\|u\|_{L^\infty(\overline{B}_9)}\leq M$.
\end{lemm}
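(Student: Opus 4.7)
The plan is to expand $\sum_i F^{ii} b_{ii}$ directly, substitute the fourth-order terms via the twice-differentiated equation, and close the estimate using the concavity inequalities of Section~3. Since $\lambda_1$ is only Lipschitz at points of eigenvalue multiplicity, I interpret the claim in the viscosity sense: after a standard perturbation $D^2 u + \varepsilon\,\mathrm{diag}(0,1,\dots,n-1)$ the top eigenvalue is smooth near $x_0$, and all estimates are uniform as $\varepsilon\to 0$.

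First I use the classical perturbation formula for a simple eigenvalue at a diagonalizing $x_0$,
\[
(\lambda_1)_i = u_{11i}, \qquad (\lambda_1)_{ii} = u_{11ii} + 2\sum_{p>1}\frac{u_{1pi}^2}{\lambda_1-\lambda_p}.
\]
The chain rule on $b=\ln\lambda_1$ followed by multiplication by $F^{ii}$ yields
\[
\lambda_1 \sum_i F^{ii} b_{ii} = \sum_i F^{ii} u_{11ii} + 2\sum_i\sum_{p>1}\frac{F^{ii} u_{1pi}^2}{\lambda_1-\lambda_p} - \sum_i \frac{F^{ii} u_{11i}^2}{\lambda_1},
\]
and differentiating $F(D^2u)=f(x,u)$ twice in $e_1$, together with the $F^{pq,rs}$ formula of Lemma~\ref{F^{ijkl}}, gives
\[
\sum_i F^{ii} u_{11ii} = -\sum_{i,j} F^{ii,jj} u_{ii1} u_{jj1} - \sum_{i\neq j} F^{ij,ji} u_{ij1}^2 + f_{x_1 x_1} + 2 f_{x_1 u} u_1 + f_{uu} u_1^2 + f_u \lambda_1.
\]
Differentiating the equation only once gives $\sum_i F^{ii} u_{ii1} = f_{x_1}+f_u u_1 = O(1)$.

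Applying the concavity inequality (Lemma~\ref{Con-lemm-1} for $k=n-1$ or Lemma~\ref{Con-lemm-2} for $k=n-2$) with $\xi_i = u_{ii1}$, and using $F = f \geq \min f > 0$ to bound the $-(2/F)(\sum F^{ii}\xi_i)^2$ term by a constant, gives
\[
-\sum_{i,j} F^{ii,jj} u_{ii1} u_{jj1} \geq (1+c(n))\frac{F^{11} u_{111}^2}{\lambda_1} - C.
\]
The unit coefficient here exactly cancels the $i=1$ contribution of $\sum F^{ii} u_{11i}^2/\lambda_1$, leaving the favorable residual $c(n) F^{11} u_{111}^2/\lambda_1$. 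To absorb the off-diagonal pieces $\sum_{i>1} F^{ii} u_{11i}^2/\lambda_1$, I pair the $i=1,\,p>1$ slice of the Codazzi-type sum $2\sum F^{ii} u_{1pi}^2/(\lambda_1-\lambda_p)$ with the $\{i,j\}=\{1,p\}$ contributions from $-\sum_{i\neq j}F^{ij,ji} u_{ij1}^2$; using the identity $-F^{1p,p1}=(F^{pp}-F^{11})/(\lambda_1-\lambda_p)$ from Lemma~\ref{F^{ijkl}}, the two families combine into $\sum_{p>1} 2F^{pp} u_{11p}^2/(\lambda_1-\lambda_p)$, which by convexity ($\lambda_p \geq 0$ forces $\lambda_1-\lambda_p \leq \lambda_1$) is at least $2\sum_{p>1} F^{pp} u_{11p}^2/\lambda_1$, absorbing the bad term with a positive residual to spare. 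The remaining $-F^{ij,ji} u_{ij1}^2$ contributions (for $\{i,j\} \neq \{1,p\}$) are nonnegative and simply discarded.

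Collecting the positive residuals and dividing by $\lambda_1$ produces the Jacobi inequality. The division is legitimate because Lemma~\ref{lambda_3-1} (for $k=n-1$) and Lemma~\ref{lambda_3} (for $k=n-2$) furnish a strictly positive lower bound $\lambda_1 \geq \lambda_n \geq c(n,\min f) > 0$, so any term of the form $C/\lambda_1$ is absorbed into the constant $C$; the $f_u \lambda_1/\lambda_1 = f_u$ term is bounded by $\|f\|_{C^{1,1}}$. The main obstacle, and the new conceptual ingredient compared with the dimension-$3$ computation in \cite{Lu23}, is the negative quadratic form $-\sum F^{ii,jj} u_{ii1} u_{jj1}$: what makes Lemmas~\ref{Con-lemm-1}--\ref{Con-lemm-2} decisive is that they deliver a strictly positive dimension-uniform $c(n)$ (namely $1$ for $k=n-1$ and $1/(2(n-1))$ for $k=n-2$) while simultaneously absorbing a full copy of $F^{11} u_{111}^2/\lambda_1$ on the left-hand side, which is precisely the cancellation needed to beat the diagonal $i=1$ bad term and make the subsequent Codazzi absorption go through in every dimension.
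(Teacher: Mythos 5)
Your proof is essentially the same as the paper's: expand $\sum_i F^{ii}b_{ii}$ via the eigenvalue perturbation formulas, substitute the twice-differentiated equation, apply Lemma~\ref{Con-lemm-1}/\ref{Con-lemm-2} with $\xi_i = u_{ii1}$, pair the $\{i,j\}=\{1,p\}$ block of $-\sum_{i\neq j}F^{ij,ji}u_{ij1}^2$ with the $i=1$ slice of the Codazzi-type sum via $-F^{1p,p1}=(F^{pp}-F^{11})/(\lambda_1-\lambda_p)$, and use $\lambda_1-\lambda_p\leq\lambda_1$ from convexity. The algebra, the role of the concavity lemma in producing the $(1+c(n))F^{11}u_{111}^2/\lambda_1$ cancellation, and the absorption of the off-diagonal bad terms all match the paper's computation step for step.

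Two remarks on where your write-up is looser than the paper's. First, the handling of eigenvalue multiplicity. You sketch a perturbation $D^2u + \varepsilon\,\mathrm{diag}(0,1,\dots,n-1)$ and assert the estimates are uniform as $\varepsilon\to 0$, but this hides the real content: for degenerate indices $1<p\leq m$ the factor $\lambda_1^\varepsilon-\lambda_p^\varepsilon$ is of order $\varepsilon$, so the terms $u_{1pi}^2/(\lambda_1^\varepsilon-\lambda_p^\varepsilon)$ would blow up \emph{unless} the numerators vanish. They do, but that vanishing is precisely the compatibility condition $u_{\alpha\beta i}=\delta_{\alpha\beta}(\lambda_1)_i$ for $\alpha,\beta\leq m$, which is exactly what Lemma~5 of \cite{BCD} establishes. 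The paper invokes this lemma directly, which both gives the correct viscosity-sense second-derivative inequality (with the sum running only over $p>m$) and supplies the identity $u_{11i}=0$ for $1<i\leq m$ used to simplify the final collection of terms. Your perturbation sketch is a plausible alternative framework, but as written it is not self-justifying; the uniform-in-$\varepsilon$ claim is the crux and it is not argued. Second, a small slip in your final paragraph: for $k=n-2$ the lower bound on $\lambda_1$ comes from $\lambda_{n-1}\geq\sqrt{f/C(n)}$ (Lemma~\ref{lambda_3}), not from $\lambda_n$, which in that case is only bounded \emph{above}. The conclusion $\lambda_1\geq c>0$ still holds, but the justification you cite is off.
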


\begin{proof}

Suppose $\lambda_1$ has multiplicity $m$ at $x_0$. By Lemma 5 in \cite{BCD}, we have
\begin{align}\label{BCD}
\delta_{\alpha\beta}\cdot{\lambda_1}_i=u_{\alpha\beta i},\quad 1\leq \alpha,\beta\leq m,
\end{align}
\begin{align*}
{\lambda_1}_{ii}\geq u_{11ii}+2\sum_{p>m}\frac{u_{1pi}^2}{\lambda_1 -\lambda_p},
\end{align*}
in the viscosity sense.

It follows that
\begin{align*}
b_{ii}=\frac{{\lambda_1}_{ii}}{\lambda_1}-\frac{{\lambda_1}_i^2}{\lambda_1^2}\geq \frac{u_{11ii}}{ \lambda_1}+2\sum_{p>m}\frac{u_{1pi}^2}{\lambda_1 (\lambda_1 -\lambda_p )}-\frac{ u_{11i}^2}{\lambda_{1}^2},
\end{align*}
in the viscosity sense.

Contracting with $F^{ii}$, we have
\begin{align}\label{m-0}
\sum_i F^{ii}b_{ii}\geq \sum_i \frac{F^{ii}u_{11ii}}{ \lambda_1}+2\sum_i\sum_{p>m}\frac{F^{ii}u_{1pi}^2}{\lambda_1 (\lambda_1 -\lambda_p )}-\sum_i \frac{F^{ii} u_{11i}^2}{\lambda_{1}^2}.
\end{align}

Differentiating (\ref{eq-orginal}), we have
\begin{align}\label{m-0-1}
\sum_i F^{ii}u_{ii11}+\sum_{p,q,r,s} F^{pq,rs}u_{pq1}u_{rs1}=f_{11}\geq -C\lambda_1-C,
\end{align}
where $C$ depends only on $\|u\|_{C^{0,1}(\overline{B}_9)}$ and $\|f\|_{C^{1,1}\left( \overline{B}_9\times [-M,M]\right)}$. Here $M$ is a large constant satisfying $\|u\|_{L^\infty(\overline{B}_9)}\leq M$. 

In the following, we will denote $C$ to be a constant depending only on $n$, $\|u\|_{C^{0,1}(\overline{B}_9)}$, $\min_{\overline{B}_9\times [-M,M]}f $ and $\|f\|_{C^{1,1}\left( \overline{B}_9\times [-M,M]\right)}$. It may change from line to line. 

Plugging (\ref{m-0-1}) into (\ref{m-0}), we have
\begin{align}\label{m-1}
\sum_i F^{ii}b_{ii}\geq -\sum_{p,q,r,s} \frac{F^{pq,rs}u_{pq1}u_{rs1}}{\lambda_1 }+2\sum_i\sum_{p>m}\frac{F^{ii}u_{1pi}^2}{\lambda_1 (\lambda_1 -\lambda_p )}-\sum_i \frac{ F^{ii}u_{11i}^2}{\lambda_{1}^2}-C.
\end{align}

By Lemma \ref{F^{ijkl}}, we have
\begin{align*}
-\sum_{p,q,r,s} F^{pq,rs}u_{pq1}u_{rs1}=-\sum_{i,j}F^{ii,jj}u_{ii1}u_{jj1}-\sum_{i\neq j}F^{ij,ji}u_{ij1}^2.
\end{align*}

Plugging into (\ref{m-1}), we have
\begin{align}\label{m-2}
\sum_i F^{ii}b_{ii}\geq &\ -\sum_{i, j} \frac{F^{ii,jj}u_{ii1}u_{jj1}}{\lambda_1} -\sum_{i\neq j} \frac{F^{ij,ji}u_{ij1}^2}{\lambda_1}+2\sum_i\sum_{p>m}\frac{F^{ii}u_{1pi}^2}{\lambda_1 (\lambda_1 -\lambda_p )}\\\nonumber
&\ -\sum_i \frac{ F^{ii}u_{11i}^2}{\lambda_{1}^2}-C.
\end{align}

By Lemma \ref{F^{ijkl}}, we have
\begin{align*}
-\sum_{i\neq j} \frac{F^{ij,ji}u_{ij1}^2}{\lambda_1}\geq -2\sum_{i>m} \frac{F^{i1,1i} u_{i11}^2}{\lambda_1}=2\sum_{i>m} \frac{(F^{ii}-F^{11})u_{11i}^2}{\lambda_1(\lambda_1-\lambda_i)}.
\end{align*}

On the other hand,
\begin{align*}
2\sum_i\sum_{p>m}\frac{F^{ii}u_{1pi}^2}{\lambda_1 (\lambda_1 -\lambda_p )} \geq  2\sum_{p>m}\frac{F^{11}u_{1p1}^2}{\lambda_1 (\lambda_1 -\lambda_p )}=2\sum_{i>m}\frac{F^{11}u_{11i}^2}{\lambda_1 (\lambda_1 -\lambda_i )}.
\end{align*}

Combining the above two inequalities and plugging into (\ref{m-2}), we have
\begin{align}\label{m-3}
\sum_i F^{ii}b_{ii}\geq &\ -\sum_{i,j} \frac{F^{ii,jj} u_{ii1}u_{jj1}}{\lambda_1}+2\sum_{i>m}\frac{F^{ii}u_{11i}^2}{\lambda_1 (\lambda_1 -\lambda_i )}-\sum_i \frac{ F^{ii}u_{11i}^2}{\lambda_{1}^2}-C.
\end{align}

By (\ref{BCD}), we have
\begin{align}\label{BCD2}
u_{11i}=u_{1i1}=\delta_{1i}\cdot(\lambda_1)_1=0,\quad 1<i\leq m.
\end{align}

Plugging into (\ref{m-3}), we have
\begin{align*}
\sum_i F^{ii}b_{ii}\geq  -\sum_{i,j} \frac{F^{ii,jj} u_{ii1}u_{jj1}}{\lambda_1}+\sum_{i>m}\frac{F^{ii}u_{11i}^2}{\lambda_1^2}- \frac{ F^{11}u_{111}^2}{\lambda_{1}^2}-C.
\end{align*}

Together with Lemma \ref{Con-lemm-1}, Lemma \ref{Con-lemm-2} and (\ref{BCD2}), we have
\begin{align*}
\sum_i F^{ii}b_{ii}\geq & -\frac{2}{\lambda_1F}\left(\sum_iF^{ii}u_{ii1}\right)^2+\sum_{i>m}\frac{F^{ii}u_{11i}^2}{\lambda_1^2}+c(n)\frac{ F^{11}u_{111}^2}{\lambda_{1}^2}-C\\
\geq &\ -\frac{2f_1^2}{\lambda_1F}+c(n)\sum_i \frac{F^{ii}u_{11i}^2}{\lambda_1^2}-C\\
\geq &\ c(n)\sum_iF^{ii}b_i^2-C.
\end{align*}

The lemma is now proved.

\end{proof}

\section{Legendre transform}

In this section, we adopt the idea by Shankar and Yuan \cite{SY-20} to use Legendre transform to obtain the bound of $b$ via its integral. The key observation is that after the transformation, $b$ is a subsolution of a uniformly elliptic equation. 

\medskip

For each $K>0$, define the Legendre transform for the function $u+\frac{K}{2}|x|^2$. We have
\begin{align*}
(x,Du(x)+Kx)=(Dw(y),y),
\end{align*}
where $w(y)$ is the Legendre transform for the function $u+\frac{K}{2}|x|^2$. Note that $y(x)=Du(x)+Kx$ is a diffeomorphism.

Define 
\begin{align*}
G(D^2w(y))=-F\left(-KI+\left(D^2w(y)\right)^{-1}\right)=-F(D^2u(x)).
\end{align*}

Suppose $D^2u$ is diagonalized at $x_0$. Then at $x_0$, we have
\begin{align}\label{g}
G^{ii}=F^{ii}\cdot w_{ii}^{-2}=F^{ii}(K+u_{ii})^2.
\end{align} 

\begin{lemm}\label{Jacobi-g}
Let $k=n-1,n-2$ and let $n\geq 3$. Let $f\in C^{1,1}(B_{10}\times \mathbb{R})$ be a positive function and let $u\in C^4(B_{10})$ be a convex solution of (\ref{eq-orginal}). For any $x_0\in B_9$, suppose that $D^2u$ is diagonalized at $x_0$ such that $\lambda_1\geq \cdots\geq \lambda_n$. Let $b=\ln\lambda_1$, then at $y(x_0)$, we have
\begin{align*}
\sum_i G^{ii}b^*_{ii}\geq -C,
\end{align*}
in the viscosity sense, where $C$ depends only on  $n$, $K$, $\|u\|_{C^{0,1}(\overline{B}_9)}$, $\min_{\overline{B}_9\times [-M,M]}f $ and $\|f\|_{C^{1,1}\left( \overline{B}_9\times [-M,M]\right) }$. Here $M$ is a large constant satisfying $\|u\|_{L^{\infty}(\overline{B}_9)}\leq M$. We use the notation $b^*(y)=b(y(x))$.
\end{lemm}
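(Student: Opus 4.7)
The plan is to transfer the Jacobi inequality from Lemma \ref{Jacobi} into the Legendre-dual $y$-coordinates via the chain rule, and then absorb the resulting first-order transport error by exploiting that the dual operator $G$ is uniformly elliptic from below.

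I would begin with a chain-rule computation at $y(x_0)$. Since $\partial y_l/\partial x_m = u_{lm}+K\delta_{lm}$, the inverse Jacobian at $x_0$ (where $D^2u$ is diagonal) is $\partial x_j/\partial y_i = \delta_{ij}/(u_{ii}+K)$. Differentiating the identity $(D^2u+KI)(\partial x/\partial y) = I$ in $y_i$ and using $\partial u_{pq}/\partial y_c = u_{pqc}/(u_{cc}+K)$ at $x_0$ yields
\begin{align*}
\frac{\partial^2 x_j}{\partial y_i^2} = -\frac{u_{iij}}{(u_{ii}+K)^2(u_{jj}+K)}.
\end{align*}
Applying the second-order chain rule to $b^*(y) = b(x(y))$, multiplying by $G^{ii} = F^{ii}(u_{ii}+K)^2$ (from (\ref{g})), and summing in $i$ gives
\begin{align*}
\sum_i G^{ii} b^*_{ii} = \sum_i F^{ii} b_{ii} - \sum_j \frac{b_j}{u_{jj}+K}\sum_i F^{ii} u_{iij}.
\end{align*}
Differentiating the equation $F(D^2u) = f(x,u)$ in $x_j$ at the diagonalized point $x_0$ yields $\sum_i F^{ii} u_{iij} = f_{x_j} + f_u u_j$, which is bounded by a constant $C$.

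Next I would invoke Lemma \ref{Jacobi} to replace $\sum_i F^{ii} b_{ii}$ by $c(n)\sum_i F^{ii} b_i^2 - C$ and estimate the remaining transport term by Young's inequality with weight $c(n)/2$:
\begin{align*}
\left|\frac{b_j(f_{x_j} + f_u u_j)}{u_{jj}+K}\right| \leq \frac{c(n)}{2}\,F^{jj} b_j^2 + \frac{(f_{x_j} + f_u u_j)^2}{2c(n)\,G^{jj}}.
\end{align*}
Once a positive lower bound $G^{jj} \geq c_0 > 0$ is in hand, the $F^{ii} b_i^2$ terms combine with room to spare and the conclusion $\sum_i G^{ii} b^*_{ii} \geq \frac{c(n)}{2}\sum_i F^{ii} b_i^2 - C \geq -C$ follows. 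The viscosity sense is preserved throughout: a $C^2$ test $\psi$ touching $b^*$ from above at $y(x_0)$ pulls back to $\phi = \psi\circ y$ touching $b$ from above at $x_0$, and the identity above is precisely the chain rule that converts the viscosity inequality for $\phi$ supplied by Lemma \ref{Jacobi} into the corresponding one for $\psi$.

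The main obstacle is thus the uniform ellipticity $G^{jj} \geq c_0 > 0$. In the case $\sigma_n/\sigma_{n-1}$ this is immediate: $F^{ii} = F^2/\lambda_i^2$ gives $G^{ii} = F^2(\lambda_i+K)^2/\lambda_i^2 \geq F^2 \geq (\min f)^2$. In the case $\sigma_n/\sigma_{n-2}$ I would use the formula $F^{ii} = (F^2/\lambda_i^2)\sum_{l\neq i}1/\lambda_l$: for $i \neq n$ the sum contains $1/\lambda_n$ and Lemma \ref{lambda_3} provides $\lambda_n \leq \sqrt{C(n)\max f}$, yielding the bound; for $i = n$ one simplifies $F^{nn} = F\,\sigma_{n-2}(\lambda|n)/[\lambda_n\sigma_{n-2}(\lambda)]$ via the identity $\sigma_{n-2}(\lambda)=\lambda_n\sigma_{n-3}(\lambda|n)+\sigma_{n-2}(\lambda|n)$, and observes $\sigma_{n-2}(\lambda|n) \geq \lambda_1\cdots\lambda_{n-2}$ while $\sigma_{n-2}(\lambda) \leq \binom{n}{2}\lambda_1\cdots\lambda_{n-2}$ (the largest $(n-2)$-fold product dominates), hence $G^{nn} \geq c(n)K^2 f/\lambda_n$, again positive by Lemma \ref{lambda_3}.
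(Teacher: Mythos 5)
Your proposal is correct and follows essentially the same route as the paper's proof: transfer the Jacobi inequality of Lemma \ref{Jacobi} into the $y$-variables, identify the first-order error with $\sum_k f_k b^*_k$ via the equation, absorb it by Young's inequality against the squared-gradient term, and close using a positive lower bound on $G^{ii}$ (which in the $k=n-2$ case comes from $\lambda_{n-1}\geq\sqrt{f/C(n)}$ of Lemma \ref{lambda_3}). The only cosmetic difference is that you differentiate $b^*=b\circ x(y)$ in $y$ via the inverse Jacobian, whereas the paper differentiates $b=b^*\circ y(x)$ in $x$; the resulting identity $\sum_i G^{ii}b^*_{ii}=\sum_i F^{ii}b_{ii}-\sum_k f_k b^*_k$ is the same. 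One small remark: the paper actually records the sharper two-sided bounds $\lambda_{n-1}/C_0\leq G^{ii}\leq C_0\lambda_{n-1}$ (formulas (\ref{g-2-1}), (\ref{g-2})) because they are reused in Lemma \ref{mean} for uniform ellipticity in the local maximum principle; for the present lemma your one-sided lower bound alone is enough, but the later argument will want the upper bound too.
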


\begin{proof}
By chain rule, we have
\begin{align}\label{chain}
\frac{\partial b}{\partial x_i}=\sum_k\frac{\partial y^k}{\partial x^i}\frac{\partial b^*}{\partial y^k}=\sum_k(K\delta_{ik}+u_{ik})b^*_k.
\end{align}

Consequently,
\begin{align*}
\frac{\partial^2 b}{\partial x_i^2}=&\ \sum_k u_{iik}b^*_k+\sum_{k,l}(K\delta_{ik}+u_{ik})b^*_{kl}(K\delta_{il}+u_{il})\\
=&\ \sum_k u_{iik}b^*_k+(K+u_{ii})^2b^*_{ii}.
\end{align*}

Together with (\ref{g}), we have
\begin{align*}
\sum_iF^{ii}b_{ii}=&\ \sum_{i,k}F^{ii}u_{iik}b^*_k+\sum_{i}F^{ii}(K+u_{ii})^2b^*_{ii}\\
=&\ \sum_kf_kb^*_k+\sum_iG^{ii}b^*_{ii}.
\end{align*}

Together with Lemma \ref{Jacobi}, we have
\begin{align*}
\sum_iG^{ii}b^*_{ii}=\sum_iF^{ii}b_{ii}-\sum_kf_kb^*_k\geq c(n)\sum_iF^{ii}b_i^2-\sum_kf_kb^*_k-C,
\end{align*}
where $C$ is a constant depending only on $n$, $K$, $\|u\|_{C^{0,1}(\overline{B}_9)}$, $\min_{\overline{B}_9\times [-M,M]}f $ and $\|f\|_{C^{1,1}\left( \overline{B}_9\times [-M,M]\right)}$. It may change from line to line. 

By (\ref{g}) and (\ref{chain}), we have
\begin{align*}
\sum_iF^{ii}b_i^2=\sum_i G^{ii}(K+u_{ii})^{-2}\bigg( (K+u_{ii})b_i^*\bigg)^2=\sum_iG^{ii}(b^*_i)^2.
\end{align*}

It follows that
\begin{align}\label{g-1}
\sum_iG^{ii}b^*_{ii}\geq&\ c(n)\sum_iG^{ii}(b^*_i)^2-\sum_kf_kb^*_k-C\\\nonumber
\geq&\  c(n)\sum_iG^{ii}(b^*_i)^2-C\sum_k|b^*_k|-C\\\nonumber
\geq&\ -C\sum_i \frac{1}{G^{ii}}-C.
\end{align}

By Lemma \ref{Sigma_k-Lemma-0}, Lemma \ref{F^{ijkl}} and (\ref{g}), we have
\begin{align}\label{g-3}
G^{ii}=&\  \left( \frac{\sigma_n^{ii}}{\sigma_k}-\frac{\sigma_n\sigma_k^{ii}}{\sigma_k^2} \right)\left( K+u_{ii} \right)^2\\\nonumber
=&\ \frac{\sigma_n}{\sigma_k}\left(\frac{1}{\lambda_i}-\frac{\sigma_k^{ii}}{\sigma_k}\right)\left(K+\lambda_i\right)^2\\\nonumber
=&\ f\cdot \frac{\sigma_k-\lambda_i\sigma_k^{ii}}{\lambda_i\sigma_k}\left(K+\lambda_i\right)^2\\\nonumber
=&\ f\cdot\frac{\sigma_k(\lambda|i)}{\lambda_i\sigma_k}\left(K+\lambda_i\right)^2.
\end{align}

{\bf Case 1}: $k=n-1$. 

In this case, by (\ref{g-3}), we have
\begin{align*}
G^{ii}=f \cdot \frac{\sigma_n}{\lambda_i^2\sigma_{n-1}}\left(K+\lambda_i\right)^2= \frac{f^2\left(K+\lambda_i\right)^2}{\lambda_i^2}.
\end{align*}

Together with Lemma \ref{lambda_3-1}, we have
\begin{align}\label{g-2-1}
\frac{1}{C_0}\leq G^{ii}\leq C_0,\quad \forall 1\leq i\leq n.
\end{align}
where $C_0$ is a positive constant depending only on $n$, $K$, $\min_{\overline{B}_9\times [-M,M]}f $ and $\max_{\overline{B}_9\times [-M,M]}f $. 

Plugging into (\ref{g-1}), we conclude that
\begin{align*}
\sum_iG^{ii}b^*_{ii}\geq -C\sum_i \frac{1}{G^{ii}}-C\geq -C.
\end{align*}

{\bf Case 2}: $k=n-2$.

In this case, for $1\leq i\leq n-1$, by (\ref{g-3}) and Lemma \ref{lambda_3}, we have
\begin{align*}
G^{ii}\leq &\ C_0\frac{\sigma_{n-1}}{\lambda_i^2\sigma_{n-2}}\cdot\lambda_i^2\leq  C_0\lambda_{n-1},\\
G^{ii}\geq &\ \frac{1}{C_0}\frac{\sigma_{n-1}}{\lambda_i^2\sigma_{n-2}}\cdot\lambda_i^2\geq \frac{\lambda_{n-1}}{C_0}.
\end{align*}

For $i=n$,  by (\ref{g-3}), Lemma \ref{lambda_3} and the fact that $\lambda_n=\frac{f\sigma_{n-2}}{\lambda_1\cdots\lambda_{n-1}}$, we have
\begin{align*}
G^{nn}\leq  &\ \frac{C_0}{\lambda_n}= \frac{C_0\lambda_1\cdots\lambda_{n-1}}{f\sigma_{n-2}}\leq  C_0\lambda_{n-1},\\
G^{nn}\geq &\  \frac{1}{C_0\lambda_n}= \frac{\lambda_1\cdots\lambda_{n-1}}{C_0f\sigma_{n-2}}\geq \frac{\lambda_{n-1}}{C_0}.
\end{align*}

Therefore,
\begin{align}\label{g-2}
\frac{\lambda_{n-1}}{C_0}\leq  G^{ii}\leq C_0\lambda_{n-1},\quad \forall 1\leq i\leq n.
\end{align}
where $C_0$ is a positive constant depending only on $n$, $K$, $\min_{\overline{B}_9\times [-M,M]}f $ and $\max_{\overline{B}_9\times [-M,M]}f $. 

Together with Lemma \ref{lambda_3}, we have
\begin{align*}
-\sum_i\frac{1}{G^{ii}}\geq -\frac{C}{\lambda_{n-1}}\geq -C.
\end{align*}

Plugging into (\ref{g-1}), we conclude that
\begin{align*}
\sum_iG^{ii}b^*_{ii}\geq -C\sum_i \frac{1}{G^{ii}}-C\geq -C.
\end{align*}

The lemma is now proved.
\end{proof}

We now state the mean value inequality. 
\begin{lemm}\label{mean}
Let $k=n-1,n-2$ and let $n\geq 3$. Let $f\in C^{1,1}(B_{10}\times \mathbb{R})$ be a positive function and let $u\in C^4(B_{10})$ be a convex solution of (\ref{eq-orginal}). Let $b=\ln\lambda_1$, where $\lambda_1$ is the largest eigenvalue of $D^2u$. Then we have
\begin{align*}
b(0)\leq  C\int_{B_1}b(x)\sigma_{n-1}(D^2u(x)) dx+C,
\end{align*}
where $C$ depends only on $n$, $\|u\|_{C^{0,1}(\overline{B}_9)}$, $\min_{\overline{B}_9\times [-M,M]}f $ and $\|f\|_{C^{1,1}\left( \overline{B}_9\times [-M,M]\right) }$. Here $M$ is a large constant satisfying $\|u\|_{L^\infty(\overline{B}_9)}\leq M$. 
\end{lemm}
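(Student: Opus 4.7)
The plan is to propagate the viscosity subsolution inequality of Lemma \ref{Jacobi-g} into a pointwise bound on $b(0)$ via a local maximum principle in the $y$-coordinates, and then transport the resulting $y$-integral back to an integral against $\sigma_{n-1}(D^2u)\,dx$ using the change of variables $y = Du(x) + Kx$.

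First I would fix a large constant $K > 0$ and invoke Lemma \ref{Jacobi-g} to obtain $\sum_i G^{ii}(y)\,b^*_{ii} \geq -C$ in the viscosity sense. When $k = n-1$ the bound (\ref{g-2-1}) gives uniform ellipticity $1/C_0 \leq G^{ii} \leq C_0$ directly. When $k = n-2$, (\ref{g-2}) gives $G^{ii}/\lambda_{n-1} \in [1/C_0, C_0]$; since Lemma \ref{lambda_3} provides the lower bound $\lambda_{n-1} \geq \sqrt{f/C(n)}$, dividing the inequality by $\lambda_{n-1}$ produces a uniformly elliptic linear inequality with bounded right-hand side. Moreover, from Lemmas \ref{lambda_3-1} and \ref{lambda_3}, $b = \ln\lambda_1 \geq \ln\lambda_{n-1} \geq -C$, so $b^* + C \geq 0$ and I may work with a nonnegative subsolution.

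Next I would apply the local maximum principle for nonnegative viscosity subsolutions of uniformly elliptic linear equations (for example, the $L^p$ version in Caffarelli--Cabré, or Gilbarg--Trudinger Theorem 9.20): on a ball $B_\rho(y(0))$ contained in the image of $B_9$ under $y = Du + Kx$,
\begin{align*}
b^*(y(0)) \leq C \int_{B_\rho(y(0))}\bigl(b^*(y) + C\bigr)\,dy + C.
\end{align*}
The radius $\rho$ can be chosen uniformly: since $D^2 w = (D^2u + KI)^{-1} \leq K^{-1}I$ by the convexity of $u$, the inverse map $x(y) = Dw(y)$ is $K^{-1}$-Lipschitz, so $x(B_\rho(y(0))) \subset B_1$ as soon as $\rho \leq K$.

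Finally I would change variables via $y \mapsto x$, whose Jacobian is $\det(D^2u(x) + KI) = \prod_i(\lambda_i + K)$. By Lemmas \ref{lambda_3-1} and \ref{lambda_3}, $\lambda_i \geq c > 0$ for $i \leq n-1$ and $\lambda_n \leq C$, hence $\lambda_n + K \leq C$ and $\lambda_i + K \leq C\lambda_i$ for $i \leq n-1$, giving
\begin{align*}
\det(D^2u + KI) \leq C\prod_{i=1}^{n-1}\lambda_i \leq C\,\sigma_{n-1}(D^2u).
\end{align*}
Combining with the previous step yields the desired inequality. The main subtlety I expect is applying the local maximum principle in the viscosity sense when the ellipticity coefficients $G^{ii}$ (or their rescalings by $\lambda_{n-1}$) are only measurable functions of $y$; this is standard and ultimately follows from the Alexandrov--Bakelman--Pucci estimate, but it is precisely the place where the uniform ellipticity bounds (\ref{g-2-1}), (\ref{g-2}) and the lower bound on $\lambda_{n-1}$ from Lemmas \ref{lambda_3-1}--\ref{lambda_3} do the essential work.
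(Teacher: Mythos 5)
Your proposal is correct and follows essentially the same route as the paper: Legendre-transform Jacobi inequality, local maximum principle in $y$-coordinates, then change of variables with the Jacobian $\det(D^2u+KI)$ bounded by $C\sigma_{n-1}$. The only cosmetic differences are that the paper fixes $K=1$, adds $A|y|^2$ to clear the right-hand side rather than using $b+C\geq 0$, and expands the Jacobian into $1+\sigma_1+\cdots+\sigma_n$ with a chain of lower bounds on the $\sigma_j$'s, whereas you factor it eigenvalue-wise using $\lambda_i\geq c$ for $i\leq n-1$ and $\lambda_n\leq C$ — both are valid and of comparable length.
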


\begin{proof}
Let $K=1$ in Lemma \ref{Jacobi-g}. Without loss of generality, we may assume $Du(0)=0$, i.e. $y(0)=0$.  

By Lemma \ref{lambda_3-1}, Lemma \ref{lambda_3}, (\ref{g-2-1}) and (\ref{g-2}), we have
\begin{align*}
\sum_i G^{ii}\geq C_0,
\end{align*}
where $C_0$ is a positive constant depending only on $n$, $\min_{\overline{B}_9\times [-M,M]}f $ and $\max_{\overline{B}_9\times [-M,M]}f $.

Consequently,
\begin{align*}
\sum_i G^{ii}\left( b^*(y)+A|y|^2\right)_{ii}\geq 0,
\end{align*}
where $A$ is a positive constant depending only on $n$, $\min_{\overline{B}_9\times [-M,M]}f $ and $\max_{\overline{B}_9\times [-M,M]}f $. 

By (\ref{g-2-1}), $G^{ii}$ is uniformly elliptic for $k=n-1$. By (\ref{g-2}), $\frac{G^{ii}}{\lambda_{n-1}} $ is uniformly elliptic for $k=n-2$. By local maximum principle (Theorem 4.8 in \cite{CC}), we have
\begin{align*}
b^*(0)+A|0|^2\leq C\int_{B_1^y}\left( b^*(y)+A|y|^2\right) dy.
\end{align*}

Since $D^2u>0$, $y(x)=Du(x)+x$ is uniformly monotone, i.e. $|y(x_I)-y(x_{II})|\geq |x_I-x_{II}|$, we have $x(B_1^y)\subset B_1$. Together with the fact that $y(0)=0$ and $\sigma_n=f\sigma_k$, we have
\begin{align*}
b(0)= b^*(0)\leq &\ C\int_{B_1}b(x)\det(D^2u(x)+I)dx+C\\
\leq &\ C\int_{B_1}b\cdot(\lambda_1+1)\cdots(\lambda_n+1) dx+C\\
\leq &\ C\int_{B_1}b\left(1+\sigma_1+\cdots+\sigma_n\right)  dx+C\\
\leq &\ C\int_{B_1}b\left(1+\sigma_1+\cdots+\sigma_{n-1}\right)  dx+C.
\end{align*}

By Lemma \ref{lambda_3-1} and Lemma \ref{lambda_3}, we have
\begin{align*}
\sigma_{n-1}\geq \lambda_1\cdots\lambda_{n-1}\geq C\lambda_1\cdots\lambda_{n-2}\geq C\sigma_{n-2}.
\end{align*}

Similarly, we can obtain
\begin{align*}
\sigma_{n-2}\geq C\sigma_{n-3}\geq \cdots\geq C\sigma_1\geq C.
\end{align*}

It follows that
\begin{align*}
b(0)\leq C\int_{B_1}b\sigma_{n-1} dx+C.
\end{align*}

The lemma is now proved.
\end{proof}

\section{Proof of Theorem \ref{Theorem} and Theorem \ref{n=2}} 

In this section, we complete the proof of Theorem \ref{Theorem} and Theorem \ref{n=2} via integration by parts. Since $F^{ij}$ is not divergence free, we will choose $H^{ij}=\sigma_kF^{ij}$ to perform the integration by parts as in \cite{Lu23}.

Define
\begin{align}\label{H1}
H^{ij}=\sigma_kF^{ij}=\sigma_n^{ij}-\frac{\sigma_n\sigma_k^{ij}}{\sigma_k}.
\end{align}

Before we start to prove the main theorem, we first prove an induction lemma. The lemma will help us to simplify the argument in the integration by parts.
\begin{lemm}\label{Cauchy}
Let $k=n-1,n-2$ and let $n\geq 3$. Let $f\in C^{1,1}(B_{10}\times \mathbb{R})$ be a positive function and let $u\in C^4(B_{10})$ be a convex solution of (\ref{eq-orginal}). For any $x_0\in B_9$, suppose that $D^2u$ is diagonalized at $x_0$. Let $b=\ln\lambda_1$, where $\lambda_1$ is the largest eigenvalue of $D^2u$. Then at $x_0$, we have
\begin{align*}
\sum_i |b_i\sigma_l^{ii}|\leq C_0\epsilon\sum_i H^{ii}b_i^2+\frac{C_0}{\epsilon}\sigma_k,\quad \forall 1\leq l\leq k,
\end{align*}
where $\epsilon$ is any positive constant and $C_0$ depends only on $n$, $\min_{\overline{B}_9\times [-M,M]}f $ and $\max_{\overline{B}_9\times [-M,M]}f $. Here $M$ is a large constant satisfying $\|u\|_{L^\infty(\overline{B}_9)}\leq M$.
\end{lemm}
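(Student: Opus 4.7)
The strategy is to reduce the claim, via weighted AM--GM, to an algebraic estimate that can be verified using the form of $H^{ii}$ from Lemma \ref{F^{ijkl}} together with the eigenvalue controls in Lemmas \ref{lambda_3-1}--\ref{lambda_3}. Since $u$ is convex, $\lambda_j>0$ and $\sigma_l^{ii}=\sigma_{l-1}(\lambda|i)\ge 0$, so applying AM--GM termwise with $\alpha=\sqrt{H^{ii}}\,b_i$ and $\beta=\sigma_l^{ii}/\sqrt{H^{ii}}$ gives
\begin{align*}
\sum_i|b_i\sigma_l^{ii}|\le\epsilon\sum_iH^{ii}b_i^2+\frac{1}{4\epsilon}\sum_i\frac{(\sigma_l^{ii})^2}{H^{ii}}.
\end{align*}
The lemma therefore follows once I establish $\sum_i(\sigma_l^{ii})^2/H^{ii}\le C(n,f)\,\sigma_k$.

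To prove this bound I would invoke $H^{ii}=\sigma_n\sigma_k(\lambda|i)/(\lambda_i\sigma_k)$ (from Lemma \ref{F^{ijkl}} and \eqref{H1}) combined with $\sigma_n=\lambda_i\sigma_{n-1}(\lambda|i)$, obtaining
\begin{align*}
\frac{(\sigma_l^{ii})^2}{H^{ii}}=\sigma_k\cdot\frac{(\sigma_{l-1}(\lambda|i))^2}{\sigma_{n-1}(\lambda|i)\,\sigma_k(\lambda|i)}.
\end{align*}
The factor $\sigma_k$ pulls out, so the task reduces to showing the ratios $R_i:=(\sigma_{l-1}(\lambda|i))^2/(\sigma_{n-1}(\lambda|i)\sigma_k(\lambda|i))$ satisfy $\sum_iR_i\le C(n,f)$. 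Note that each $R_i$ depends only on the eigenvalues $\{\lambda_j:j\ne i\}$, so I may exploit constraints among these $n-1$ variables coming from the equation.

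For $k=n-1$ the equation rewrites as $\sum_j1/\lambda_j=1/f$, forcing $\lambda_j\ge f$ for every $j$. Since $\sigma_k(\lambda|i)=\sigma_{n-1}(\lambda|i)=\prod_{j\ne i}\lambda_j$, expanding
\begin{align*}
\frac{\sigma_{l-1}(\lambda|i)}{\sigma_{n-1}(\lambda|i)}=\sum_{\substack{|T|=n-l\\ i\notin T}}\prod_{j\in T}\frac{1}{\lambda_j}\le C(n)\,f^{-(n-l)}
\end{align*}
immediately gives $R_i\le C(n,f)$. For $k=n-2$ the identity $\sum_{j<p}1/(\lambda_j\lambda_p)=1/f$ combined with Lemma \ref{lambda_3} yields $\lambda_j\ge c\sqrt f$ for $j\le n-1$ together with the pairwise bound $\lambda_j\lambda_p\ge f$ for every pair. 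Using $\sigma_{n-2}(\lambda|i)=\sigma_{n-1}(\lambda|i)\sum_{j\ne i}1/\lambda_j$ to rewrite $R_i$, I would split into the case $i=n$ (where all remaining eigenvalues are bounded below and the argument above applies) and the case $i<n$; in the latter I decompose $\sigma_{l-1}(\lambda|i)=\sigma_{l-1}(\lambda|i,n)+\lambda_n\sigma_{l-2}(\lambda|i,n)$, use the lower bound $\sigma_{n-2}(\lambda|i)\ge\prod_{j\le n-1,\,j\ne i}\lambda_j$ coming from the $m=n$ summand of its definition, and invoke $\lambda_n\lambda_{n-1}\ge f$ to absorb the stray $1/\lambda_n$.

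The main obstacle is exactly this $k=n-2$, $i<n$ case: since $\lambda_n$ may be arbitrarily small, a naive estimate of $R_i$ produces an uncontrolled $1/\lambda_n$ factor. The resolution is that $\sigma_{n-1}(\lambda|i)$ carries one factor of $\lambda_n$ while $\sigma_{n-2}(\lambda|i)$ has a dominant term independent of $\lambda_n$, and combined with the pairwise bound $\lambda_n\lambda_{n-1}\ge f$ these degeneracies offset each other, keeping $R_i$ controlled purely by $n$ and the $f$-bounds. Once this bookkeeping is carried out, all resulting powers of $1/f$ are absorbed into the constant $C_0(n,\min f,\max f)$, completing the argument.
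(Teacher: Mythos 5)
Your proposal is correct. At its core it uses the same two ingredients as the paper's proof --- Young's inequality against the weight $H^{ii}$, followed by elementary-symmetric-function bounds coming from the equation and from Lemmas \ref{lambda_3-1}--\ref{lambda_3} --- but the bookkeeping is organized differently and, I would say, more transparently. You apply Young uniformly in $i$ to isolate a single remainder $\sum_i(\sigma_l^{ii})^2/H^{ii}$, and then observe the clean factorization $(\sigma_l^{ii})^2/H^{ii}=\sigma_k\cdot R_i$ with $R_i=(\sigma_{l-1}(\lambda|i))^2/(\sigma_{n-1}(\lambda|i)\sigma_k(\lambda|i))$ a scale-invariant ratio depending only on the eigenvalues $\{\lambda_j:j\ne i\}$; this makes the target $\sum_iR_i\le C(n,\min f,\max f)$ visibly independent of the overall size of $D^2u$. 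The paper instead splits the sum over $i$ into the ranges $i\le l$ and $i>l$ (with a further $i=n$ case when $k=n-2$), bounding $\sigma_l^{ii}$ by $\sigma_l/\lambda_i$ respectively $\sigma_{l-1}$, lower-bounding $H^{ii}$ by $\sigma_{n-1}/\lambda_i^2$ (or $\sigma_{n-2}/\lambda_n$), and applying a differently calibrated Young in each range; the remainders are then reabsorbed into $C\sigma_k$ using the chain $\sigma_k\ge C\sigma_{k-1}\ge\cdots$. The two routes are equivalent after unwinding, but your factorization avoids the $i\le l$ versus $i>l$ split. The price is the casework on $i=n$, $i=n-1$, $i<n-1$ when $k=n-2$, which you have sketched rather than carried out; I have checked that the pieces close --- the pairwise bound $\lambda_{n-1}\lambda_n\ge f$, the lower bound $\sigma_{n-2}(\lambda|i)\ge\prod_{j<n,\,j\ne i}\lambda_j$, and the observation that the largest monomial of $\sigma_{l-1}(\lambda|i)$ omits $\lambda_n$ (since $l-1\le n-3$) together cancel the dangling $1/\lambda_n$, leaving $R_i\le C(n,\min f,\max f)$ --- but to be fully rigorous you would want to write the $i=n-1$ subcase out, since there $\lambda_{n-1}$ is removed from the index set and you must use $\lambda_{n-1}/\lambda_{n-2}^2\le1/\lambda_{n-1}$ to control the stray power of $\lambda_{n-1}$ that appears after substituting $1/\lambda_n\le\lambda_{n-1}/f$.
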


\begin{proof}

Without loss of generality, we may assume $\lambda_1\geq \cdots\geq \lambda_n$.

By Lemma \ref{Sigma_k-Lemma-0} and Lemma \ref{F^{ijkl}}, we have
\begin{align}\label{H}
H^{ii}=&\  \sigma_k\left( \frac{\sigma_n^{ii}}{\sigma_k}-\frac{\sigma_n\sigma_k^{ii}}{\sigma_k^2} \right)=\frac{\sigma_n}{\sigma_k}\left(\frac{\sigma_k}{\lambda_i}-\sigma_k^{ii}\right)\\\nonumber
=&\ f\cdot \frac{\sigma_k-\lambda_i\sigma_k^{ii}}{\lambda_i}=f\cdot\frac{\sigma_k(\lambda|i)}{\lambda_i}.
\end{align}

{\bf Case 1}: $k=n-1$.

In this case, by (\ref{H}), we have
\begin{align*}
H^{ii}=f\cdot\frac{\sigma_n}{\lambda_i^2}.
\end{align*}

For $1\leq i\leq l$, by Lemma \ref{lambda_3-1} and the fact that $1\leq l\leq n-1$, we have
\begin{align*}
|b_i\sigma_l^{ii}|\leq C_0|b_i|\frac{\sigma_l}{\lambda_i}\leq C_0\epsilon \frac{\sigma_nb_i^2}{\lambda_i^2}+\frac{C_0}{\epsilon}\frac{\sigma_l^2}{\sigma_n}\leq C_0\epsilon H^{ii}b_i^2+\frac{C_0}{\epsilon} \sigma_l.
\end{align*}

For $l<i\leq n$, by Lemma \ref{lambda_3-1} and the fact that $1\leq l\leq n-1$, we have
\begin{align*}
|b_i\sigma_l^{ii}|\leq C_0|b_i|\sigma_{l-1}\leq C_0\epsilon \frac{\sigma_nb_i^2}{\lambda_i^2}+\frac{C_0}{\epsilon}\frac{\lambda_i^2\sigma_{l-1}^2}{\sigma_n}\leq C_0\epsilon H^{ii}b_i^2+\frac{C_0}{\epsilon}\sigma_l.
\end{align*}

Therefore,
\begin{align*}
\sum_i |b_i\sigma_l^{ii}|\leq C_0\epsilon\sum_i H^{ii}b_i^2+\frac{C_0}{\epsilon}\sigma_{n-1},\quad \forall 1\leq l\leq n-1.
\end{align*}

{\bf Case 2}: $k=n-2$.

In this case, for $1\leq i\leq n-1$, by (\ref{H}), we have
\begin{align*}
H^{ii}=f\cdot\frac{\sigma_{n-2}(\lambda|i)}{\lambda_i}\geq C_0\frac{\sigma_{n-1}}{\lambda_i^2}.
\end{align*}

For $i=n$, by (\ref{H}), we have
\begin{align*}
H^{nn}=f\cdot\frac{\sigma_{n-2}(\lambda|n)}{\lambda_n}\geq C_0\frac{\sigma_{n-2}}{\lambda_n}.
\end{align*}

For $1\leq i\leq l$, by Lemma \ref{lambda_3} and the fact that $1\leq l\leq n-2$, we have
\begin{align*}
|b_i\sigma_l^{ii}|\leq C_0|b_i|\frac{\sigma_l}{\lambda_i}\leq C_0\epsilon \frac{\sigma_{n-1}b_i^2}{\lambda_i^2}+\frac{C_0}{\epsilon}\frac{\sigma_l^2}{\sigma_{n-1}}\leq C_0\epsilon H^{ii}b_i^2+\frac{C_0}{\epsilon} \sigma_l.
\end{align*}

For $l<i\leq n-1$, by Lemma \ref{lambda_3} and the fact that $1\leq l\leq n-2$, we have
\begin{align*}
|b_i\sigma_l^{ii}|\leq C_0|b_i|\sigma_{l-1}\leq C_0\epsilon \frac{\sigma_{n-1}b_i^2}{\lambda_i^2}+\frac{C_0}{\epsilon}\frac{\lambda_i^2\sigma_{l-1}^2}{\sigma_{n-1}}\leq C_0\epsilon H^{ii}b_i^2+\frac{C_0}{\epsilon}\sigma_l.
\end{align*}

For $i=n$, by Lemma \ref{lambda_3} and the fact that $1\leq l\leq n-2$, we have
\begin{align*}
|b_n\sigma_l^{nn}|\leq C_0|b_n|\sigma_{l-1}\leq C_0\epsilon \frac{\sigma_{n-2}b_n^2}{\lambda_n}+\frac{C_0}{\epsilon}\frac{\lambda_n\sigma_{l-1}^2}{\sigma_{n-2}}\leq C_0\epsilon H^{nn}b_n^2+\frac{C_0}{\epsilon}\sigma_{l-1}.
\end{align*}

Therefore,
\begin{align*}
\sum_i |b_i\sigma_l^{ii}|\leq C_0\epsilon\sum_i H^{ii}b_i^2+\frac{C_0}{\epsilon}\sigma_{n-2},\quad \forall 1\leq l\leq n-2.
\end{align*}

The lemma is now proved.
\end{proof}

Proof of Theorem \ref{Theorem} for $k=n-1$.
\begin{proof}
Let $b=\ln \lambda_1$. Let $\varphi$ be a cutoff function such that $\varphi=1$ on $B_1$ and $\varphi=0$ outside $B_2$. By Lemma \ref{Sigma_k-Lemma-0} and the fact that $\sum_i (\sigma_{n-1}^{ij})_i=0$, we have
\begin{align*}
\int_{B_1}b\sigma_{n-1}dx\leq&\ \int_{B_2}\varphi b \sigma_{n-1}dx=\frac{1}{n-1}\sum_{i,j}\int_{B_2}\varphi b \sigma_{n-1}^{ij}u_{ij}dx\\\nonumber
=&\ -\frac{1}{n-1}\sum_{i,j}\int_{B_2} (\varphi_i b+\varphi b_i) \sigma_{n-1}^{ij}u_j dx\\\nonumber
\leq &\ C\int_{B_2} b\sigma_{n-2} dx-\frac{1}{n-1}\sum_{i,j}\int_{B_2}\varphi b_i\sigma_{n-1}^{ij}u_j dx,
\end{align*}
where $C$ is a constant depending only on $n$ and $\|u\|_{C^{0,1}(\overline{B}_9)}$. 

In the following, we will denote $C$ to be a constant depending only on $n$, $\|u\|_{C^{0,1}(\overline{B}_9)}$, $\min_{\overline{B}_9\times [-M,M]}f $ and $\|f\|_{C^{1,1}\left( \overline{B}_9\times [-M,M]\right)}$. It may change from line to line. Here $M$ is a large constant satisfying $\|u\|_{L^\infty(\overline{B}_9)}\leq M$.

Together with Lemma \ref{mean}, we have
\begin{align}\label{p1-0}
b(0)\leq  C\int_{B_2} b\sigma_{n-2} dx-\frac{C}{n-1}\sum_{i,j}\int_{B_2}\varphi b_i\sigma_{n-1}^{ij}u_j dx+C.
\end{align}

By Lemma \ref{Cauchy}, we have
\begin{align*}
\left|-\frac{C}{n-1}\sum_{i,j}\int_{B_2}\varphi b_i\sigma_{n-1}^{ij}u_j dx\right|\leq C\sum_i\int_{B_2}H^{ii}b_i^2dx+C\int_{B_2}\sigma_{n-1}dx.
\end{align*}

It follows that 
\begin{align*}
b(0)\leq C\int_{B_2} b\sigma_{n-2} dx+C\sum_i\int_{B_2}H^{ii}b_i^2dx+C\int_{B_2}\sigma_{n-1}dx+C.
\end{align*}

With the help of Lemma \ref{Cauchy}, we can repeat the process and obtain
\begin{align}\label{p1-1}
b(0)\leq C\sum_i\int_{B_3}H^{ii}b_i^2dx+C\int_{B_3}\sigma_{n-1}dx+C.
\end{align}

By Lemma \ref{Jacobi}, we have
\begin{align*}
\sum_{i,j} H^{ij}b_{ij}\geq c(n)\sum_{i,j} H^{ij}b_ib_j -C\sigma_{n-1},
\end{align*}
in the viscosity sense.

By Theorem 1 in \cite{I}, $b$ also satisfies the above inequality in the distribution sense. Let $\Phi$ be a cutoff function such that $\Phi=1$ on $B_3$ and $\Phi=0$ outside $B_4$. Then
\begin{align*}
&\ \sum_{i,j}\int_{B_3}H^{ij}b_ib_jdx\leq\sum_{i,j} \int_{B_4}\Phi^2H^{ij}b_ib_jdx\\
\leq &\ \frac{1}{c(n)}\sum_{i,j} \int_{B_4}\Phi^2H^{ij}b_{ij} dx+C\int_{B_4}\sigma_{n-1}dx\\
\leq &\ -\frac{2}{c(n)} \sum_{i,j} \int_{B_4}\Phi\Phi_j H^{ij}b_i dx-C\sum_{i,j}\int_{B_4}\Phi^2 (H^{ij})_j b_idx+C\int_{B_4}\sigma_{n-1}dx\\
\leq &\ \frac{1}{2}\sum_{i}\int_{B_4} \Phi^2H^{ii}b_i^2dx+C\sum_i \int_{B_4}\Phi_i^2H^{ii}dx+C\sum_{i,j}\int_{B_4}\Phi^2 f_j\sigma_{n-1}^{ij} b_idx\\
&\ +C\int_{B_4}\sigma_{n-1}dx
\end{align*}
We have used the fact that $\sum_j (H^{ij})_j=\sum_j\left((\sigma_n^{ij})_j-(f\sigma_{n-1}^{ij})_j\right)=-\sum_j f_j\sigma_{n-1}^{ij}$ in the last inequality.

Consequently,
\begin{align*}
\frac{1}{2}\sum_{i,j}\int_{B_4} \Phi^2H^{ii}b_i^2dx\leq  C\sum_i \int_{B_4}\Phi_i^2H^{ii}dx+C\sum_i\int_{B_4}\Phi^2 f_i\sigma_{n-1}^{ii} b_idx+C\int_{B_4}\sigma_{n-1}dx.
\end{align*}

Together with Lemma \ref{Sigma_k-Lemma-0}, Lemma \ref{Cauchy} and the fact that $H^{ii}=\sigma_n^{ii}-f\sigma_{n-1}^{ii}$ , we have
\begin{align*}
&\ \frac{1}{2}\sum_{i,j}\int_{B_4} \Phi^2H^{ii}b_i^2dx\\
\leq &\ C\sum_i \int_{B_4} H^{ii}dx+C\epsilon\sum_i\int_{B_4}\Phi^2H^{ii}b_i^2  dx+\frac{C}{\epsilon}\int_{B_4}\sigma_{n-1}dx\\
\leq &\ C\int_{B_4}\left(C\sigma_{n-1}-C\sigma_{n-2}\right)dx+\frac{1}{4}\sum_{i}\int_{B_4} \Phi^2H^{ii}b_i^2dx+C\int_{B_4}\sigma_{n-1}dx\\
\leq &\ \frac{1}{4}\sum_{i,j}\int_{B_4} \Phi^2H^{ii}b_i^2dx+C\int_{B_4}\sigma_{n-1}dx.
\end{align*}

Consequently,
\begin{align*}
\frac{1}{4}\sum_{i,j}\int_{B_4} \Phi^2H^{ij}b_ib_jdx\leq C\int_{B_4}\sigma_{n-1}dx.
\end{align*}

Plugging into (\ref{p1-1}), we have
\begin{align*}
b(0)\leq C\int_{B_4}\sigma_{n-1}dx+C.
\end{align*}

Let $\Psi$ be a cutoff function such that $\Psi=1$ on $B_4$ and $\Psi=0$ outside $B_5$. Then
\begin{align*}
\int_{B_4}\sigma_{n-1}dx\leq \int_{B_5}\Psi\sigma_{n-1}dx=-\frac{1}{n-1}\sum_{i,j}\int_{B_5}\Psi_i\sigma_{n-1}^{ij}u_jdx\leq C\int_{B_5}\sigma_{n-2}dx.
\end{align*}

Thus
\begin{align*}
b(0)\leq C\int_{B_5}\sigma_{n-2}dx+C.
\end{align*}

Repeating the process, we have
\begin{align*}
b(0)\leq C\int_{B_6}\sigma_1dx+C\leq C.
\end{align*}

The theorem is now proved.
\end{proof}

Proof of Theorem \ref{Theorem} for $k=n-2$.
\begin{proof}
By (\ref{p1-0}), we have
\begin{align}\label{p-1}
b(0)\leq  C\int_{B_2} b\sigma_{n-2} dx-\frac{C}{n-1}\sum_{i,j}\int_{B_2}\varphi b_i\sigma_{n-1}^{ij}u_j dx+C.
\end{align}

For each $x_0\in B_2$, without loss of generality, we may assume $D^2u$ is diagonalized at $x_0$. By (\ref{H}), we have
\begin{align*}
H^{ii}=f\cdot\frac{\sigma_{n-2}(\lambda|i)}{\lambda_i}=\frac{f\sigma_{n-1}^{ii}}{\lambda_i}.
\end{align*}

Together with Lemma \ref{Sigma_k-Lemma-0} and the fact that $H^{ii}=\sigma_n^{ii}-f\sigma_{n-2}^{ii}$, we have
\begin{align*}
\sum_{i,j}|\varphi b_i\sigma_{n-1}^{ij}u_j|\leq &\ C\sum_i|b_i\sigma_{n-1}^{ii}|\leq  C \sum_i |b_i|\lambda_i H^{ii} \\
\leq&\  C\sum_{i}H^{ii}b_i^2+C\sum_i H^{ii}\lambda_i^2\\
=&\ C\sum_{i}H^{ii}b_i^2+C\bigg(\sigma_1\sigma_n-f\left(\sigma_1\sigma_{n-2}-(n-1)\sigma_{n-1}\right)\bigg)\\
\leq&\ C\sum_{i}H^{ii}b_i^2+C\sigma_{n-1}.
\end{align*}

Plugging into (\ref{p-1}), we have
\begin{align*}
b(0)\leq C\int_{B_2} b\sigma_{n-2} dx+C\sum_{i}\int_{B_2}H^{ii}b_i^2dx+C\int_{B_2}\sigma_{n-1}dx+C.
\end{align*}

The rest follows exactly the same as the proof of the case $k=n-1$. We omit the details.

\end{proof}

Proof of Theorem \ref{n=2}.
\begin{proof}
We first note that the concavity inequality (Lemma \ref{Con-lemm-1}) works for $n=2$ as well. Consequently, the Jacobi inequality (Lemma \ref{Jacobi}) works for $\frac{\sigma_2}{\sigma_1}$ as well. It follows that the mean value inequality (Lemma \ref{mean}) works for $\frac{\sigma_2}{\sigma_1}$. Now the estimate for the integral follows exactly the same argument as in the case $\frac{\sigma_n}{\sigma_{n-1}}$ for $n\geq 3$. The theorem is now proved.
\end{proof}

\section{Singular solution}

In this section, we will show that interior $C^2$ estimate fails for $\frac{\sigma_n}{\sigma_k}$ for $k\leq n-3$ by extending Pogorelov's example \cite{Pb}.

\medskip

Proof of Theorem \ref{Singular}.
\begin{proof}
Let
\begin{align*}
u^\sigma(x)=(1+x_1^2)\left(\sigma+\sum_{i=2}^nx_i^2\right)^{\frac{\alpha}{2}},
\end{align*}
where $\alpha=2-\frac{2}{n-k}$. Note that for $k=0$, it is exactly Pogorelov's example. Since $1\leq k\leq n-3$, we have $1<\alpha<2$.

It follows that
\begin{align*}
u^\sigma_{11}= 2\left(\sigma+\sum_{i=2}^nx_i^2\right)^{\frac{\alpha}{2}},\quad u^\sigma_{1i}= \left(\sigma+\sum_{i=2}^nx_i^2\right)^{\frac{\alpha}{2}-1 }2\alpha x_1x_i,\quad i\neq 1.
\end{align*}

Moreover,
\begin{align*}
u^\sigma_{ii}=&\ (1+x_1^2)\left(\sigma+\sum_{i=2}^nx_i^2\right)^{\frac{\alpha}{2}-2}\left(\alpha(\alpha-2)x_i^2+\alpha\left(\sigma+\sum_{i=2}^nx_i^2\right)\right),\quad i\neq 1,\\
u^\sigma_{ij}=&\ (1+x_1^2)\left(\sigma+\sum_{i=2}^nx_i^2\right)^{\frac{\alpha}{2}-2}\alpha(\alpha-2)x_ix_j,\quad i,j\neq 1,\quad i\neq j.
\end{align*}

It follows that there exists $r>0$ such that for all $x\in B_r$, we have
\begin{align*}
\sigma_n(D^2u^\sigma)=&\ \left(\sigma+\sum_{i=2}^nx_i^2\right)^{\frac{n\alpha}{2}-(n-1)}\varphi^\sigma,\\
\sigma_k(D^2u^\sigma)=&\ \left(\sigma+\sum_{i=2}^nx_i^2\right)^{\frac{k\alpha}{2}-k}\phi^\sigma,
\end{align*}
where $\varphi^\sigma$ and $\phi^\sigma$ are smooth positive function with positive lower bound in $B_r$.

Consequently,
\begin{align*}
\left(\frac{\sigma_n}{\sigma_k}\right) (D^2u^\sigma)=\left(\sigma+\sum_{i=2}^nx_i^2\right)^{\frac{(n-k)\alpha}{2}-(n-k-1)}f^\sigma.
\end{align*}
where $f^\sigma$ is smooth positive function with positive lower bound in $B_r$.

Recall that $\alpha=2-\frac{2}{n-k}$, thus
\begin{align*}
\left(\frac{\sigma_n}{\sigma_k}\right) (D^2u^\sigma)=f^\sigma.
\end{align*}

Let $\sigma\rightarrow0$, by \cite{Urbas}, $u^\sigma\rightarrow u$ and $f^\sigma\rightarrow f$ locally uniformly, where $u=(1+x_1^2)\left(\sum_{i=2}^nx_i^2\right)^{\frac{\alpha}{2}}$ and $f$ is smooth positive function with positive lower bound in $B_r$. Moreover, $u$ is a viscosity solution of
\begin{align*}
\left(\frac{\sigma_n}{\sigma_k}\right) (D^2u)=f.
\end{align*}

It is easy to see $u$ is convex and Lipschitz. On the other hand, by direct computation, we can see that $u\notin C^{1,\beta}\left( B_{\frac{r}{2}}\right) $ for any $\beta>1-\frac{2}{n-k}$. Consequently, $u$ is the desired singular solution.

\end{proof}

\begin{rema}
For general Hessian quotient equation $\left(\frac{\sigma_k}{\sigma_l}\right)(D^2u)=f(x)$ with $1\leq l<k\leq n$ and $k-l\geq 3$, we can construct the singular solution by the same method of Urbas \cite{Urbas}. First of all, take our singular solution for the Hessian quotient equation $\left(\frac{\sigma_k}{\sigma_l}\right)(D^2u)=f(x)$ in $\mathbb{R}^k$, then extend it trivially to $\mathbb{R}^n$. It follows from \cite{Urbas} that it is a singular solution for $\left(\frac{\sigma_k}{\sigma_l}\right)(D^2u)=f(x)$ in $\mathbb{R}^n$.
\end{rema}

\noindent

{\it Acknowledgement}: The author would like to thank Professors Pengfei Guan for enlightening conversations over concavity of Hessian quotient operator and for great generosity for sharing Lemma \ref{Con-lemm-1} and Lemma \ref{Con-lemm-2}. The author would also like to thank Xiangwen Zhang for helpful discussions.

\end{document}